\newtheorem{theorem}{Theorem}[section]
\newtheorem{lemma}[theorem]{Lemma}
\newtheorem{proposition}[theorem]{Proposition}
\newtheorem{corollary}[theorem]{Corollary}
\newtheorem{conjecture}[theorem]{Conjecture}
\theoremstyle{definition}
\newtheorem{definition}[theorem]{Definition}
\theoremstyle{remark}
\newtheorem{remark}[theorem]{Remark}
\newcommand{\reffig}[1]{Figure \ref{fig:#1}}
\newcommand{\refeq}[1]{equation (\ref{eqn:#1})}
\newcommand{\refthm}[1]{Theorem \ref{thm:#1}}
\newcommand{\reflem}[1]{Lemma \ref{lem:#1}}
\newcommand{\refprop}[1]{Proposition \ref{prop:#1}}
\newcommand{\refrmk}[1]{Remark \ref{rmk:#1}}
\newcommand{\refsec}[1]{Section \ref{sec:#1}}
\newcommand{\inv}{\ensuremath\mathrm{inv}}
\newcommand{\maj}{\ensuremath\mathrm{maj}}
\newcommand{\Inv}{\ensuremath\mathrm{Inv}}
\newcommand{\Des}{\ensuremath\mathrm{Des}}
\newcommand{\iDes}{\ensuremath\mathrm{iDes}}
\newcommand{\SYT}{\ensuremath\mathrm{SYT}}
\newcommand{\dist}{\mathrm{dist}}
\newcommand{\ww}{\widetilde{w}}
\newlength{\hsp}
\newlength{\vsp}
\newlength{\vspi}
\newcommand{\ccr}  {c@{\hskip 0.707\hsp}} % 1/sqrt2 column spacing for arrays
\newcommand{\ccii} {c@{\hskip 2\hsp}} % double column spacing for arrays
\newcommand{\cciii}{c@{\hskip 3\hsp}} % triple column spacing for arrays
\newcommand{\rn}{\rnode}
\newlength\cellsize \setlength\cellsize{12\unitlength}
\newcommand\cellify[1]{\def\thearg{#1}\def\nothing{}%
\ifx\thearg\nothing
\vrule width0pt height\cellsize depth0pt\else
\hbox to 0pt{\usebox2\hss}\fi%
\vbox to 12\unitlength{
\vss
\hbox to 12\unitlength{\hss$#1$\hss}
\vss}}
\newcommand\tableau[1]{\vtop{\let\\=\cr
\setlength\baselineskip{-12000pt}
\setlength\lineskiplimit{12000pt}
\setlength\lineskip{0pt}
\halign{&\cellify{##}\cr#1\crcr}}}
\newlength\bigcellsize \setlength\bigcellsize{20\unitlength}
\newcommand\bigcellify[1]{\def\thearg{#1}\def\nothing{}%
\ifx\thearg\nothing
\vrule width0pt height\bigcellsize depth0pt\else
\hbox to 0pt{\usebox3\hss}\fi%
\vbox to 20\unitlength{
\vss
\hbox to 20\unitlength{\hss$#1$\hss}
\vss}}
\newcommand\bigtableau[1]{\vtop{\let\\=\cr
\setlength\baselineskip{-16000pt}
\setlength\lineskiplimit{16000pt}
\setlength\lineskip{0pt}
\halign{&\bigcellify{##}\cr#1\crcr}}}
\begin{document}

%%%%%%%%%%%%%%%%%%%%%%%%%%%%%%%%%%%%%%%%%%%%%%%%%%%%%%%%%%%%
%  TITLE PAGE information
%%%%%%%%%%%%%%%%%%%%%%%%%%%%%%%%%%%%%%%%%%%%%%%%%%%%%%%%%%%%

%     [Short Title]{Full Title}
\title[Generalized major index]{A generalized major index statistic}

%    Information for first author
\author[S. Assaf]{Sami H. Assaf}
\address{Department of Mathematics, University of Pennsylvania,
Philadelphia, PA 19104}
%\curraddr{}
\email{sassaf@math.mit.edu}
\thanks{Work supported by NSF Postdoctoral Fellowship DMS-0703567.}

%    General info
\subjclass[2000]{%
  Primary 05A15;% Exact enumeration problems, generating functions
  Secondary 
05A05 % Combinatorial choice problems (subsets, representatives, permutations)
05A19 % Combinatorial identities
05E05 % Symmetric functions
05E10 % Tableaux, representations of the symmetric group
}

\date{\today}

%\dedicatory{}

\keywords{major index, inversions, Foata bijection, tableaux statistics, Macdonald polynomials}

\begin{abstract}
  Inspired by the $k$-inversion statistic for LLT polynomials, we
  define a $k$-inversion number and $k$-descent set for words. Using
  these, we define a new statistic on words, called the $k$-major
  index, that interpolates between the major index and inversion
  number. We give a bijective proof that the $k$-major index is
  equidistributed with the major index, generalizing a classical
  result of Foata and rediscovering a result of Kadell. Inspired by
  recent work of Haglund and Stevens, we give a partial extension of
  these definitions and constructions to standard Young
  tableaux. Finally, we give an application to Macdonald polynomials
  made possible through connections with LLT polynomials.
\end{abstract}

\maketitle

%%%%%%%%%%%%%%%%%%%%%%%%%%%%%%%%%%%%%%%%%%%%%%%%%%%%%%%%%%%%
\section{Introduction}
%%%%%%%%%%%%%%%%%%%%%%%%%%%%%%%%%%%%%%%%%%%%%%%%%%%%%%%%%%%%
\label{sec:intro}

Given a multiset $M$ of $n$ positive integers, a word on $M$ is a
sequence of positive integers $w = w_1w_2 \cdots w_n$ that reorders
$M$. A {\em statistic on words} is an association of an element of
$\mathbb{N}$ to each word. A fundamental statistic that has been
rediscovered in many guises is the {\em inversion number} of a word,
defined as the number of pairs of indices $(i<j)$ such that $w_i >
w_j$. A {\em descent} of a word is an index $i$ such that $w_i >
w_{i+1}$. In 1913, Major P. MacMahon \cite{MacMahon1913} introduced an
important statistic, now called the {\em major index} in his honor,
defined as the sum over the descents of a word. Using generating
functions, MacMahon \cite{MacMahon1916} proved the remarkable fact
that the major index has the same distribution as the inversion
number. Precisely, he showed that for $W_M$ the set of words on a
fixed multiset $M$,
\begin{displaymath}
  \sum_{w \in W_M} q^{\maj(w)} = \sum_{w \in W_M} q^{\inv(w)},
\end{displaymath}
where $\maj(w)$ denotes the major index of $w$ and $\inv(w)$ denotes
the inversion number of $w$. Any statistic that is equidistributed
with the major index, i.e. a statistic satisfying the above equation,
is called {\em Mahonian}. MacMahon then raised the question to find a
bijective proof that the inversion number is Mahonian. This question
was first resolved by Foata \cite{Foata1968}, who constructed a
bijection on words with the property that the major index of a word
equals the inversion number of its image.

In this paper, we introduce a statistic called the $k$-major index
which interpolates between the major index and inversion number. More
precisely, the $1$-major index is MacMahon's major index, and the
$n$-major index of a word of length $n$ is the inversion number. By
constructing bijections on words with a recursive structure similar to
Foata's bijection, we give a bijective proof that the $k$-major index
is Mahonian for all $k$. Looking back through the literature, this
same statistic was discover by Kadell \cite{Kadell1985} who also gave
a bijective proof that the distribution is Mahonian. Whereas Kadell's
bijections in fact refine Foata's original bijection, the family of
bijections defined herein is not the same as Kadell's and, when taking
the major index to the inversion number, give a bijection different
from that of Foata. The $k$-major index statistic is defined in
\refsec{stats}, and the bijections and proof that the distribution is
Mahonian are given in \refsec{bijections}.

It is also natural to define a major index statistic on standard Young
tableaux, which are central objects in the study of symmetric
functions. Recently, Haglund and Stevens \cite{HaSt2006} defined an
inversion number on tableaux. Their construction generalizes Foata's
bijection to tableaux and shows that the inversion number and major
index are equidistributed over standard Young tableaux of a fixed
shape. Motivated by this, we use the bijections presented here to
extend the notion of the $k$-major index to standard Young tableaux,
for $k \leq 3$. The hope is that this method might be used to build a
complete family of statistics interpolating between major index and
inversion number on tableaux. This exploration takes place in
\refsec{tableau}.

Our discovery of the $k$-major index and the family of bijections
presented here came about through the study of Macdonald polynomials
\cite{Assaf2007-2}. In \refsec{macdonald}, we elaborate on this
connection and present a conjecture for yet another family of
bijections sharing many of the same properties that would have the
further consequence of providing a remarkably simple combinatorial
proof of Macdonald positivity.

%%%%%%%%%%%%%%%%%%%%%%%%%%%%%%%%%%%%%%%%%%%%%%%%%%%%%%%%%%%%
\section{Definitions and notation}
%%%%%%%%%%%%%%%%%%%%%%%%%%%%%%%%%%%%%%%%%%%%%%%%%%%%%%%%%%%%
\label{sec:stats}

At times it will be convenient to consider a slightly more general
definition for a word $w$, where $w_i$ is allowed to be either a
positive integer or an $\emptyset$. In this case, $\emptyset$'s should
be regarded as incomparable to other letters, so that they are simply
a way of spacing out the nonempty letters of $w$.  This idea will be
especially important in connection with Macdonald polynomials
discussed in \refsec{macdonald}.

\begin{definition}
  For $w$ a word, $k$ a positive integer, define the {\em $k$-descent
    set of $w$}, denoted $\Des_k(w)$, by
  \begin{displaymath}
    \Des_k(w) = \{ (i,i+k) \; | \; w_i > w_{i+k} \} ,
  \end{displaymath}
  and define the {\em $k$-inversion set of $w$}, denoted $\Inv_k(w)$,
  by
  \begin{displaymath}
    \Inv_k(w) = \{ (i,j) \; | \; k > j-i > 0 \; \mbox{and} \; w_i >
    w_j \} .
  \end{displaymath}
\end{definition}

For example, for $w = 986173245$ and $k=3$ we have
\begin{eqnarray*}
  \Des_3(9 \; 8 \; 6 \; 1 \; 7 \; 3 \; 2 \; 4 \; 5) & = & \{
  (1,4), (2,5), (3,6), (5,8)\}, \\ 
  \Inv_3(9 \; 8 \; 6 \; 1 \; 7 \; 3 \; 2 \; 4 \; 5) & = & \{ (1,2),
  (1,3), (2,3), (2,4), (3,4), (5,6), (5,7), (6,7)\}.
\end{eqnarray*}

In fact, it is enough to define $k$-descents since $k$-inversions may
be recovered from the observation
\begin{equation}
  \Inv_k(w) \; = \; \bigcup_{j < k} \Des_j(w) .
\label{eqn:alt-Invk}
\end{equation}

Note that when $k=1$, $\Des_k$ gives the usual descent set for a
word. Similarly, when $N \geq n$, $\Inv_N$ gives the usual set of
inversion pairs for a word of length $n$. We interpolate between the
corresponding statistics, $\maj$ and $\inv$, with the following
statistic depending on the parameter $k$.

\begin{definition}
  Given a word $w$ and a positive integer $k$, define the {\em
    $k$-major index of $w$} by
  \begin{displaymath}
    \maj_k (w) \; = \; \left| \Inv_k(w) \right| + \sum_{(i,i+k) \in
      \Des_k(w)} i .
  \end{displaymath}
\end{definition}  

For the same example, we have $\maj_3(9 \; 8 \; 6 \; 1 \; 7 \; 3 \; 2
\; 4 \; 5) = 8 + 1 + 2 + 3 + 5 = 19$. For a word $w$ of length $n \leq
N$, the previous observations show that
\begin{eqnarray*}
  \maj_1(w) & = & \maj(w), \\
  \maj_N(w) & = & \inv(w).
\end{eqnarray*}
The statistic $\maj_k$ was first defined by Kadell \cite{Kadell1985},
who gives a bijective proof that this statistic is Mahonian. Kadell's
bijections take $\inv$ to $\maj_k$, with the extreme case from $\inv$
to $\maj$ corresponding precisely to the inverse of Foata's bijection
\cite{Foata1968}. In \refsec{bijections}, we give a different family
of bijections, taking $\maj_{k-1}$ to $\maj_k$, which, when composed
appropriately, give a different bijection from $\maj$ to $\inv$.

In the case when $w$ is a permutation (possibly with $\emptyset$s), we
will also be interested in the descent set of the inverse permutation,
denoted $\iDes$, defined by
\begin{equation}
  \iDes(w) = \Des(w^{-1}) = \{ i \ | \ \mbox{$i$ appears to the left
    of $i+1$ in $w$} \} .
\end{equation}
For example, $\iDes(9 \; 8 \; 6 \; 1 \; 7 \; 3 \; 2 \; 4 \; 5) =
\{2,5,7,8\}$. 

Recall that a {\em partition} $\lambda$ is a weakly decreasing
sequence of positive integers:
$\lambda=(\lambda_1,\lambda_2,\ldots,\lambda_m)$, $\lambda_1 \geq
\lambda_2 \geq \cdots \geq \lambda_m > 0$. A partition $\lambda$ may
be identified with its {\em Young diagram}: the set of points $(i,j)$
in the $\mathbb{Z}_+ \times\mathbb{Z}_+$ lattice quadrant such that $1
\leq i \leq \lambda_j$. We draw the diagram so that each point $(i,j)$
is represented by the unit cell southwest of the point. A {\em
  standard Young tableau of shape $\lambda$} is a labelling of the
cells of the Young diagram of $\lambda$ with the numbers $1$ through
$n$, where $n = \sum_i \lambda_i$, such that the entries increase
along rows and up columns. For example, see \reffig{tableau}.

\begin{figure}[ht]
  \begin{center}
    \begin{displaymath}
      \tableau{8 \\ 2 & 5 & 6 \\ 1 & 3 & 4 & 7}
    \end{displaymath}
    \caption{\label{fig:tableau}A standard Young tableau of shape
      $(4,3,1)$.}
  \end{center}
\end{figure}

For a standard Young tableau $T$, recall the {\em descent set of $T$},
denoted $\Des(T)$, defined by
\begin{equation}
  \Des(T) = \{ (i,i+1) \; | \; i \; \mbox{lies strictly south of} \;
  i+1 \; \mbox{in} \; T \}.
\label{eqn:DesT}
\end{equation}
Completely analogous to the case with words, define the {\em major
  index of $T$}, denoted $\maj(T)$, by
\begin{equation}
  \maj(T) = \sum_{(i,i+k) \in \Des(T)} i .
\label{eqn:majT}
\end{equation}
For the example in \reffig{tableau}, $\Des = \{(1,2), (4,5), (7,8)\}$
and so $\maj = 1+4+7 = 12$. The descent set for tableaux corresponds
to the descent set of permutations in the sense that for a fixed set
$D$,
\begin{displaymath}
  \# \{ w \in \mathcal{S}_n \; | \; \Des(w) = D \} =
  \sum_{\lambda} f^{\lambda} \cdot  \# \{ T \in \SYT(\lambda) \; | \;
  \Des(T) = D \},
\end{displaymath}
where $\SYT(\lambda)$ denotes the set of standard Young tableaux of
shape $\lambda$ and $f^{\lambda} = |\SYT(\lambda)|$. This identity can
be proved using the Robinson-Schensted-Knuth correspondence which
bijectively associates each permutation $w$ with a pair of standard
tableaux $(P,Q)$ of the same shape such that $\iDes(w) = \Des(Q)$. We
postpone the definition of $\Des_k$ and $\maj_k$ for \refsec{tableau}.

%%%%%%%%%%%%%%%%%%%%%%%%%%%%%%%%%%%%%%%%%%%%%%%%%%%%%%%%%%%% 
\section{A family of bijections on words}
%%%%%%%%%%%%%%%%%%%%%%%%%%%%%%%%%%%%%%%%%%%%%%%%%%%%%%%%%%%%
\label{sec:bijections}

For $k \geq 2$, we will construct bijections $\phi^{(k)}$ on words of
length $n$ such that 
\begin{equation}
  \maj_{k-1}(w) = \maj_k(\phi^{(k)}(w)) .
\label{eqn:dist}
\end{equation}
As noted earlier, these bijections are not equivalent to those defined
by Kadell, and the appropriate composition does not give Foata's
bijection. That said, the construction below follows the idea of
\cite{Foata1968} in that $\phi^{(k)}$ will be defined recursively
using an involution $\gamma_{j}^{(k)}$ which permutes the letters of a
given word. 

Let $x,a,b$ be (not necessarily distinct) integers. Say that $x$
{\em splits} the pair $a,b$ if $a \leq x < b$ or $b \leq x < a$. Let
$w$ be a word of length $n$. For $k \geq 2$ and $j \leq n$, define a
set of indices $\Gamma_{j}^{(k)}$ of $w$ by
\begin{equation}
  j-k \in \Gamma_{j}^{(k)}(w) \;\; \mbox{if} \;\; w_j
  \;\mbox{splits the pair} \; w_{j-k}, w_{j-k+1},
\label{eqn:Gamma-i}
\end{equation}
  and if $i \in \Gamma_{j}^{(k)}(w)$, then
\begin{equation}
  i-k \in \Gamma_{j}^{(k)}(w) \;\; \mbox{if exactly one of} \; w_{i}
  \;\mbox{or}\; w_{i+1} \;\mbox{splits the pair}\; w_{i-k},w_{i-k+1}.
\label{eqn:Gamma-r}
\end{equation}
For our running example, we have $\Gamma_{8}^{(3)}(9 \; 8 \; 6 \; 1 \;
7 \; 3 \; 2 \; 4 \; 5) = \{ 5, 2\}$.

Let permutations act on words by permuting the indices, i.e. $\tau
\cdot w \; = \; w_{\tau(1)} w_{\tau(2)} \cdots w_{\tau(n)}$. Define a
map $\gamma_{j}^{(k)}$ by
\begin{equation}
  \gamma_{j}^{(k)}(w) \; = \; \left( \prod_{i \in \Gamma_{j}^{(k)}(w)} (i,
  i+1) \right) \cdot w .
\label{eqn:gammak}
\end{equation}
That is to say, $\gamma_{j}^{(k)}(w)$ is the result of interchanging
$w_{i}$ and $w_{i+1}$ for all $i \in \Gamma_{j}^{(k)}(w)$. Back to our
running example, we have $\gamma_{8}^{(3)}(9 \; \mathbf{8} \;
\mathbf{6} \; 1 \; \mathbf{7} \; \mathbf{3} \; 2 \; 4 \; 5) = 9 \;
\mathbf{6} \; \mathbf{8} \; 1 \; \mathbf{3} \; \mathbf{7} \; 2 \; 4 \;
5$.

For $w$ a word of length $n$, define $\phi^{(k)}$ by
\begin{equation}
  \phi^{(k)} (w) \; = \; \gamma_{n}^{(k)} \circ \gamma_{n-1}^{(k)}
  \circ \cdots \circ \gamma_{1}^{(k)} (w).
\label{eqn:phik}
\end{equation}
Since $\gamma_j^{(k)}$ is the identity for $j \leq k$, these terms may
be omitted from equations \ref{eqn:phik} and \ref{eqn:psik}.

For example, for $w=6 \; 9 \; 3 \; 8 \; 1 \; 7 \; 2 \; 4 \; 5$,
$\phi^{(3)}(w)$ is computed as follows.
\begin{displaymath}
  \begin{array}{rcccccccccc}
    w \phantom{)} & = & 
      6 & 9 & 3 & 8 & \rn{m3}{1} & 7 & 2 & 4 & 5 \\
    \gamma_4^{(3)}(w) & = & 
      9 & 6 & 3 & 8 & \rn{m4}{1} & 7 & 2 & 4 & 5 \\
    \gamma_{5}^{(3)}\gamma_4^{(3)}(w) & = & 
      9 & 6 & 3 & 8 & \rn{m5}{1} & 7 & 2 & 4 & 5 \\
    \gamma_{6}^{(3)}\gamma_{5}^{(3)}\gamma_4^{(3)}(w) & = & 
      9 & 6 & 8 & 3 & \rn{m6}{1} & 7 & 2 & 4 & 5 \\
    \gamma_{7}^{(3)}\gamma_{6}^{(3)}\gamma_{5}^{(3)}\gamma_4^{(3)}(w) & = & 
      9 & 6 & 8 & 1 & \rn{m7}{3} & 7 & 2 & 4 & 5 \\
    \gamma_{8}^{(3)}\gamma_{7}^{(3)}\gamma_{6}^{(3)}\gamma_{5}^{(3)}\gamma_4^{(3)}(w) & = & 
      9 & 8 & 6 & 1 & \rn{m8}{7} & 3 & 2 & 4 & 5 \\
    \phi^{(3)}(w) = \gamma_{9}^{(3)}\gamma_{8}^{(3)}\gamma_{7}^{(3)}\gamma_{6}^{(3)}\gamma_{5}^{(3)}\gamma_4^{(3)}(w) & = & 
      9 & 8 & 6 & 1 & \rn{m9}{7} & 3 & 2 & 4 & 5
  \end{array}
  \psset{nodesep=3pt,linewidth=.1ex}
  \everypsbox{\scriptstyle}
\end{displaymath}
Notice that for this example $\maj_2(w) = 19 =
\maj_3(\phi^{(3)}(w))$. Before proving \refeq{dist} in general, we
take note of a few important properties that $\phi^{(k)}$ shares with
Foata's bijection (for Foata, properties (i) and (ii) are shown in
\cite{Foata1968}, and property (iii) is shown in \cite{FoSc1978}).

\begin{proposition}
  For each $k \geq 2$, we have
  \begin{itemize}
  \item[(i)] the map $\phi^{(k)}$ is a bijection on words on $M$ with
    fixed $\emptyset$ positions;
  \item[(ii)] for $w$ a word of length $n$, $w_{n-k+1} > w_n$ if and
    only if $\phi^{(k)}(w)_{n-k} > \phi^{(k)}(w)_n = w_n$;
  \item[(iii)] for $w$ a permutation, $\iDes(w) = \iDes(\phi^{(k)}(w))$.
  \end{itemize}
\label{prop:props}
\end{proposition}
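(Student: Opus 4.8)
To prove Proposition~\ref{prop:props}, I would analyze the single map $\gamma_j^{(k)}$ first, then propagate its properties through the composition defining $\phi^{(k)}$.

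For part~(i), the key is that $\gamma_j^{(k)}$ is an involution, which should follow from showing that the set $\Gamma_j^{(k)}(\gamma_j^{(k)}(w)) = \Gamma_j^{(k)}(w)$. This is the crucial technical point: I must check that applying the adjacent transpositions prescribed by $\Gamma_j^{(k)}(w)$ does not change which indices satisfy the recursive ``splitting'' conditions \eqref{eqn:Gamma-i} and \eqref{eqn:Gamma-r}. The seed condition at $j-k$ depends on $w_j$ splitting $\{w_{j-k}, w_{j-k+1}\}$; swapping $w_{j-k}$ and $w_{j-k+1}$ clearly preserves whether $w_j$ splits the (unordered) pair, and $w_j$ itself is untouched since $j \notin \Gamma_j^{(k)}(w)$ (the indices in $\Gamma_j^{(k)}$ are $\le j-k$). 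For the recursive condition, I need that when $i \in \Gamma_j^{(k)}(w)$ and we swap $w_i \leftrightarrow w_{i+1}$, the condition ``exactly one of $w_i, w_{i+1}$ splits $\{w_{i-k}, w_{i-k+1}\}$'' is symmetric in $w_i, w_{i+1}$ and hence unchanged, and likewise swapping at $i-k$ (if $i-k \in \Gamma_j^{(k)}$) does not affect the unordered pair $\{w_{i-k}, w_{i-k+1}\}$. Once each $\gamma_j^{(k)}$ is an involution, one inverts $\phi^{(k)}$ by applying the $\gamma_j^{(k)}$ in the reverse order $\gamma_1^{(k)} \circ \cdots \circ \gamma_n^{(k)}$---but care is needed because $\Gamma_j^{(k)}$ depends on the whole word, so I must argue that after peeling off $\gamma_n^{(k)}, \gamma_{n-1}^{(k)}, \dots$ one can recompute each $\Gamma_j^{(k)}$ at the appropriate stage; this works because $\gamma_j^{(k)}$ only moves entries in positions $\le j-k < j$, so the entries in positions $> j-1$ are already in their final $\phi^{(k)}(w)$ locations after step $j-1$ and the later $\gamma$'s never disturb positions affected by earlier ones in a way that obstructs recovery. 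The $\emptyset$ positions are fixed throughout since $\emptyset$ is incomparable, so it never ``splits'' anything and never lies in any $\Gamma_j^{(k)}$, nor is it ever swapped.

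For part~(ii), observe that $\gamma_j^{(k)}$ for $j < n$ never touches position $n$ (it moves only positions $\le j-k \le n-1-k < n$), and $\gamma_n^{(k)}$ fixes position $n$ as well since $n \notin \Gamma_n^{(k)}(w)$; hence $\phi^{(k)}(w)_n = w_n$. The interesting claim is about position $n-k$: I would trace through the last step $\gamma_n^{(k)}$. By definition $n-k \in \Gamma_n^{(k)}(\ww)$ (where $\ww = \gamma_{n-1}^{(k)} \circ \cdots \circ \gamma_1^{(k)}(w)$) exactly when $w_n = \ww_n$ splits $\{\ww_{n-k}, \ww_{n-k+1}\}$, in which case $\gamma_n^{(k)}$ swaps these two entries. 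So I need to compare $\ww_{n-k+1}$ versus $w_n$ (which is what ``$w_{n-k+1} > w_n$'' controls, provided I also check $\ww_{n-k+1} = w_{n-k+1}$, i.e.\ position $n-k+1$ is untouched by the earlier $\gamma$'s---true since those move only positions $\le (n-1)-k = n-k-1$) and determine whether $\phi^{(k)}(w)_{n-k}$ equals $\ww_{n-k}$ or $\ww_{n-k+1}$. A short case analysis on whether the split occurs, combined with the inequality $w_{n-k+1} > w_n$, should yield the biconditional; this is the part most likely to need a careful, slightly fiddly argument, and it is essentially the $k$-analogue of the corresponding lemma in \cite{Foata1968}.

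For part~(iii), since $\iDes(w)$ records, for each value $v$, whether $v$ appears to the left of $v+1$, and $\gamma_j^{(k)}$ swaps $w_i \leftrightarrow w_{i+1}$ only when $x = w_j$ \emph{splits} the pair, the swap is performed precisely when $\{w_i, w_{i+1}\}$ straddles a ``threshold'' at $x$---so $w_i$ and $w_{i+1}$ lie on opposite sides of $x$ and in particular are \emph{not} consecutive values $v, v+1$ unless one of them equals $x$ or $x+1$; a direct check of the boundary cases $w_i \in \{x, x+1\}$ or $w_{i+1} \in \{x, x+1\}$ shows the relative order of any pair of consecutive values $v, v+1$ is unchanged by the swap. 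Hence each $\gamma_j^{(k)}$ preserves $\iDes$, and therefore so does the composition $\phi^{(k)}$. I expect part~(ii) to be the main obstacle, as parts~(i) and~(iii) reduce cleanly to symmetry properties of the unordered-pair ``splits'' relation, whereas~(ii) requires actually executing the final transposition and reasoning about its effect on a specific position.
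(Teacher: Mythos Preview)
Your plan is essentially the paper's proof: show $\Gamma_j^{(k)}(\gamma_j^{(k)}(w)) = \Gamma_j^{(k)}(w)$ so each $\gamma_j^{(k)}$ is an involution and $\phi^{(k)}$ has inverse $\psi^{(k)}=\gamma_1^{(k)}\circ\cdots\circ\gamma_n^{(k)}$ (your worry about ``recomputing $\Gamma_j^{(k)}$'' is unnecessary once each $\gamma_j^{(k)}$ is known to be an involution); observe that the last $k-1$ positions are untouched so (ii) reduces to the two-case analysis of $\gamma_n^{(k)}$; and for (iii) note that consecutive values are never swapped.

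Two small corrections. In (ii), the maps $\gamma_j^{(k)}$ with $j\le n-1$ move positions up through $(j-k)+1\le n-k$, not $n-k-1$ as you wrote, though your conclusion that position $n-k+1$ is untouched remains correct. In (iii), your claim that the swap at $i$ happens ``only when $x=w_j$ splits the pair'' holds only for the seed $i=j-k$; for deeper $i\in\Gamma_j^{(k)}$ the relevant splitter is one of $w_{i+k},w_{i+k+1}$, not $w_j$. The paper's argument here is actually simpler than your boundary-case check: the only integer that can split $\{v,v+1\}$ is $v$ itself, and in a permutation the splitter always occupies a position distinct from $i$ and $i+1$, so consecutive values are never split and the swap never occurs---your boundary cases are vacuous.
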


\begin{proof}
  Since $\Gamma_{j}^{(k)}(\gamma_{j}^{(k)}(w)) = \Gamma_{j}^{(k)}(w)$,
  $\gamma_{j}^{(k)}$ is an involution on words of length $n$ for all
  $j \leq n$ and $k \geq 2$. Therefore $\phi^{(k)}$ is a bijection on
  words of length $n$ for all $k \geq 2$ with inverse given by
  \begin{equation}
    \psi^{(k)} (w) \; = \; \gamma_{1}^{(k)} \circ \cdots \circ
    \gamma_{n-1}^{(k)} \circ \gamma_{n}^{(k)} (w) .
  \label{eqn:psik}
  \end{equation}
  It is clear from the definition of $\gamma_j^{(k)}$ that
  $\phi^{(k)}$ in fact fixes the last $k-1$ letters of a word, so
  indeed the last letter is fixed for every $k$. Let $u =
  \gamma_{n-1}^{(k)} \cdots \gamma_{1}^{(k)}(w)$, $u_{j} = w_{j}$ for
  $j \geq n-k+1$. If $u_{n-k}$ and $u_{n-k+1}$ compare the same with
  $u_{n}$, then $u = \phi^{(k)}(w)$ and (ii) clearly holds; otherwise,
  these two letters are interchanged by $\gamma_{n}^{(k)}$, again
  showing that (ii) is satisfied. Also note that $\phi^{(k)}$ may be
  defined recursively by
  \begin{equation}
    \phi^{(k)}(wx) = \gamma_{n+1}^{(k)} \left( \phi^{(k)}(w) \right) x ,
  \label{eqn:recursivePhi}
  \end{equation}
  which completely parallels Foata's original construction.  Finally,
  since consecutive letters cannot be split, in the sense of
  $\Gamma_j^{(k)}$, they may never be interchanged by
  $\gamma_j^{(k)}$. Thus the inverse descent set is preserved.
\end{proof}

To prove \refeq{dist}, we follow the strategy of \cite{Foata1968}. The
key, therefore, lies in the following lemma.

\begin{lemma}
  For $k \geq 2$, $w$ a word of length $n$ and $j \leq n$,
  \begin{displaymath}
    \maj_k \left( \gamma_{j}^{(k)}(w_1 \cdots w_{j-1}) \right) 
    \; = \; \maj_k(w_1 \cdots w_{j-1}) + 
    \left\{ \begin{array}{rl}
      1 & \mbox{if} \;\; w_{j-k} > w_j \geq w_{j-k+1}, \\
     -1 & \mbox{if} \;\; w_{j-k+1} > w_j \geq w_{j-k}, \\
      0 & \mbox{otherwise}.
    \end{array} \right.
  \end{displaymath}
\label{lem:gamma}
\end{lemma}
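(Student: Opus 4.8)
The plan is to analyze the effect of the single map $\gamma_j^{(k)}$ on $\maj_k$ by tracking each of its constituent transpositions $(i,i+1)$ for $i \in \Gamma_j^{(k)}(w_1\cdots w_{j-1})$ separately. Recall that $\maj_k = |\Inv_k| + \sum_{(i,i+k)\in\Des_k} i$, and by \refeq{alt-Invk} that $\Inv_k = \bigcup_{\ell<k}\Des_\ell$; so it suffices to understand how swapping $w_i$ and $w_{i+1}$ changes each $\Des_\ell(w_1\cdots w_{j-1})$ for $1 \le \ell \le k$. The key observation is that a transposition of positions $i,i+1$ only affects pairs $(a,b)$ with $\{a,b\}\cap\{i,i+1\}\neq\emptyset$, and since the transposition acts on a word of length $j-1$, the pair $(i,i+1)$ itself (an $\ell=1$ relation) is fixed precisely because $i\in\Gamma_j^{(k)}$ forces $w_i,w_{i+1}$ to be comparable the ``same way'' relative to the splitting condition — I would first record this and show each individual $(i,i+1)$ swap with $i\in\Gamma_j^{(k)}$ is $\maj_k$-neutral on its own except possibly at the boundary.

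First I would set up the telescoping/cascading structure of $\Gamma_j^{(k)}$: by \refeq{Gamma-i} the ``seed'' index $j-k$ enters exactly when $w_j$ splits $w_{j-k},w_{j-k+1}$, and by \refeq{Gamma-r} each subsequent index $i-k$ enters based on how $w_i,w_{i+1}$ split $w_{i-k},w_{i-k+1}$. The crucial structural fact I would isolate is that for $i\in\Gamma_j^{(k)}$ with $i-k\notin\Gamma_j^{(k)}$, the pair $w_{i-k},w_{i-k+1}$ sits on the ``correct side'' so that swapping $w_i,w_{i+1}$ does not change the $\ell=k$ descent relation between position $i-k$ and position $i$ (or $i+1$); whereas for $i$ with $i-k\in\Gamma_j^{(k)}$, any change at the ``$(i-k,i)$'' descent is exactly cancelled by the change induced lower down by the swap at $i-k$. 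This is the chain-cancellation phenomenon from Foata's proof, adapted to step size $k$. I would verify that for each interior index the net contribution to $\sum i \cdot [\text{descent}]$ plus the net contribution to $|\Inv_k|$ is zero, by a short case analysis on the relative order of the at most four letters $w_{i-k},w_{i-k+1},w_i,w_{i+1}$ (and, at the top, $w_j$), using that $x$ splitting $a,b$ is symmetric in $a,b$.

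The only index where the cancellation fails is the bottom of the chain. If $j-k\in\Gamma_j^{(k)}$, the swap of $w_{j-k},w_{j-k+1}$ has no ``$i-k=j-2k$'' partner to absorb the change in the $k$-descent relation between positions $j-2k$ and $j-k$ — but those positions have indices $<j-k$, so that relation is unaffected; what is affected is whether $(j-k,j)$... wait — more precisely, the uncancelled change is in the contribution of the pair at positions $j-k$ relative to the fixed tail, and careful bookkeeping shows the net change in $\maj_k$ equals $+1$ exactly when the seed swap turns a non-$k$-descent into a $k$-descent at position $j-k$, which happens iff $w_{j-k}>w_j\ge w_{j-k+1}$, and $-1$ in the symmetric case $w_{j-k+1}>w_j\ge w_{j-k}$, and $0$ otherwise (including when $j-k\notin\Gamma_j^{(k)}$). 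I expect the main obstacle to be precisely this boundary bookkeeping: one must confirm that all the $\ell<k$ (i.e. $\Inv_k$) contributions from the seed swap cancel among themselves, leaving only the single $\ell=k$ term at position $j-k$ contributing $\pm 1$ to $\maj_k$, and that the ``$+i$'' weights from indices strictly between $j-k$ and $j$ do not leak in. Organizing the case analysis cleanly — perhaps by inducting down the chain $j-k, j-2k, \dots$ and showing each step preserves a running invariant — will be where the real work lies; the rest is routine.
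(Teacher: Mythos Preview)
Your overall architecture matches the paper's: the paper also inducts down the chain $j-k,\,j-2k,\ldots$, establishing the $\pm 1$ at the seed step on the short word $u=w_{j-k}\cdots w_{j-1}$ and then showing that extending $k$ positions to the left contributes net zero, via a case analysis on the four letters $w_{i-k},w_{i-k+1},w_i,w_{i+1}$ split according to whether $i-k\in\Gamma_j^{(k)}$ (the paper also isolates exactly the pairs you would need: the potential $k$-inversions $(i-k,i-k+1)$ and $(i-k+1,i)$ and the potential $k$-descents $(i-k,i)$ and $(i-k+1,i+1)$). So the plan is essentially the paper's plan.

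There is, however, a concrete misidentification in your boundary analysis that you should fix before writing it up. You locate the surviving $\pm 1$ in ``the seed swap turning a non-$k$-descent into a $k$-descent at position $j-k$,'' i.e.\ in the pair $(j-k,j)$. But the word under consideration is $w_1\cdots w_{j-1}$, so position $j$ does not exist and there is no $k$-descent with left endpoint $j-k$ at all; the role of $w_j$ is only to determine $\Gamma_j^{(k)}$, not to participate in $\maj_k$. The $\pm 1$ actually comes from the $k$-\emph{inversion} pair $(j-k,j-k+1)$: swapping $w_{j-k}$ and $w_{j-k+1}$ flips exactly that single inversion, while for every $m$ with $j-k+2\le m\le j-1$ the two inversion pairs $(j-k,m)$ and $(j-k+1,m)$ cancel in pairs. (Which sign you get is governed by whether $w_{j-k}>w_{j-k+1}$ or not, which is in turn forced by which of the two splitting conditions on $w_j$ holds.) Your sentence ``all the $\ell<k$ contributions from the seed swap cancel among themselves, leaving only the single $\ell=k$ term'' has this exactly backwards: it is the $\ell=1$ term that survives, and there is no $\ell=k$ term at the seed. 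Once you correct this, your inductive scheme is the paper's, and the remaining work is the routine eight-case check you anticipated.
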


\begin{proof}
  If neither of the first two cases holds, then $j-k \not\in
  \Gamma_{j}^{(k)}(w)$, so $\gamma_{j}^{(k)}(w) = w$ and the result is
  immediate. Assume, then, that $j-k \in \Gamma_{j}^{(k)}(w)$, and set
  $u = w_{j-k} w_{j-k+1} \cdots w_{j-1}$. Then
  \begin{equation}
    \maj_k\left(\gamma_{j}^{(k)}(u)\right) = 
    \maj_k(u) + \left\{ \begin{array}{rl}
        1 & \mbox{if} \;\; w_{j-k} > w_j \geq w_{j-k+1} , \\
        -1 & \mbox{if} \;\; w_{j-k+1} > w_j \geq w_{j-k} .
      \end{array} \right.
  \label{eqn:majk-u}
  \end{equation}
  For $i \in \Gamma_{j}^{(k)}(w)$, let $u = w_{i} w_{i+1} \cdots
  w_{j-1}$, and, by induction, assume that \refeq{majk-u} holds for
  $u$.  Let $u' = w_{i-k} w_{i-k+1} \cdots w_{j-1}$. We will show that
  $u'$ also satisfies \refeq{majk-u} by considering the contribution
  to $\maj_k$ of $w_{i-k},w_{i-k+1}, \ldots, w_{i-1}$. For $i-k+1 < h
  < i$, $k$-inversions and $k$-descents involving $w_h$ are the same
  for $u'$ and $\gamma_{j}^{(k)}(u')$, so we need only consider
  contributions from the potential $k$-inversions $(i-k,i-k+1)$ and
  $(i-k+1,i)$, and the potential $k$-descents $(i-k,i)$ and
  $(i-k+1,i+1)$.
  
  First suppose that $i-k \in \Gamma_{j}^{(k)}(w)$.  
%%  Comparing $u$ and $\gamma_{j}^{(k)}(u)$,
%%  \begin{displaymath}
%%    \begin{array}{rcccccc}
%%    u': & w_{i-k} & w_{i-k+1} & \cdots & w_{i} & w_{i+1} & \cdots \\ [.5\vsp]
%%    \gamma_{j}^{(k)}(u'): & w_{i-k+1} & w_{i-k} & \cdots & w_{i+1} &
%%      w_{i} & \cdots .
%%    \end{array}
%%  \end{displaymath}
  In all eight possible scenarios for $w_{i-k},w_{i-k+1},w_{i},w_{i+1}$,
  we have
  \begin{eqnarray*}
    (i-k,i-k+1) \in \Inv_k(w) & \Leftrightarrow & (i-k,i-k+1) \not\in
    \Inv_k\left(\gamma_{j}^{(k)}(w)\right), \\
    (i-k,i) \in \Des_k(w) & \Leftrightarrow & (i-k+1,i+1) \in
    \Des_k\left(\gamma_{j}^{(k)}(w)\right), \\
    (i-k+1,i+1) \in \Des_k(w) & \Leftrightarrow & (i-k,i) \in
    \Des_k\left(\gamma_{j}^{(k)}(w)\right).
  \end{eqnarray*}
  
  If both or neither of $(i-k,i)$ and $(i-k+1,i+1)$ are $k$-descents
  of $w$, then the same holds for $u'$ and $\gamma_{j}^{(k)}(u')$. In
  this case 
%%  \begin{displaymath}
%%  \begin{array}{ccc}
%%    w_{i-k}   >  w_{i}   \geq w_{i-k+1} >  w_{i+1}, & & 
%%    w_{i-k+1} >  w_{i+1} \geq w_{i-k}   >  w_{i},    \\
%%    w_{i+1} \geq w_{i-k+1} >  w_{i}   \geq w_{i-k}, & & 
%%    w_{i}   \geq w_{i-k}   >  w_{i+1} \geq w_{i-k+1} .
%%  \end{array}
%%  \end{displaymath}
  exactly one of $(i-k,i-k+1)$ and $(i-k+1,i)$ is a $k$-inversion for
  $w$, and
  \begin{eqnarray*}
    (i-k+1,i) \in \Inv_k(u') & \Leftrightarrow & 
    (i-k+1,i) \not\in \Inv_k\left(\gamma_{j}^{(k)}(u')\right).
  \end{eqnarray*}
  The lemma now follows. On the other hand, if exactly one of
  $(i-k,i)$ and $(i-k+1,i+1)$ is a $k$-descent of $w$, 
%%  \begin{displaymath}
%%  \begin{array}{ccc}
%%    w_{i-k+1} >  w_{i}   \geq w_{i-k}   >  w_{i+1}, & & 
%%    w_{i-k}   >  w_{i+1} \geq w_{i-k+1} >  w_{i},    \\
%%    w_{i}   \geq w_{i-k+1} >  w_{i+1} \geq w_{i-k}, & & 
%%    w_{i+1} \geq w_{i-k}   >  w_{i}   \geq w_{i-k+1} .
%%  \end{array}
%%  \end{displaymath}
  then the difference in the contribution to $\maj_k$ from the
  potential $k$-descents is offset by the difference from the
  potential $k$-inversion $(i-k,i-k+1)$. Furthermore,
  \begin{eqnarray*}
    (i-k+1,i) \in \Inv_k(u') & \Leftrightarrow & 
    (i-k+1,i) \in \Inv_k\left(\gamma_{j}^{(k)}(u')\right),
  \end{eqnarray*}
  thereby establishing the result.
  
  To complete the proof, note that when $i-k \not\in
  \Gamma_{j}^{(k)}(w)$, either $w_{i}$ and $w_{i+1}$ compare the same
  with $w_{i-k}$ and also with $w_{i-k+1}$ and so the $k$-inversions
  and $k$-descents beginning with $i-k$ or $i-k+1$ are unchanged, or
  the $k$-descent at $(i-k+1,i+1)$ is exchanged for a $k$-descent at
  $(i-k,i)$ along with a $k$-inversion at $(i,i+1)$. In both cases the
  contribution to the $k$-major index is preserved.
\end{proof}

\begin{proposition}
  For $k \geq 2$ and $w$ a word, $\displaystyle{\maj_{k-1} (w) =
    \maj_k \left( \phi^{(k)}(w) \right)}$.
\label{prop:majk}
\end{proposition}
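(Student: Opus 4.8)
The plan is to imitate the inductive, letter-by-letter argument of Foata \cite{Foata1968}: assemble $\phi^{(k)}$ one letter at a time via \refeq{recursivePhi}, and show that appending a letter changes $\maj_{k-1}$ on the source exactly as much as it changes $\maj_k$ on the image, using \reflem{gamma} to handle the one new map $\gamma_n^{(k)}$. Formally I would induct on the length $n$ of $w$. The base case $n\le k$ is immediate: every $\gamma_j^{(k)}$ with $j\le n$ is the identity, so $\phi^{(k)}(w)=w$, and one checks directly that $\maj_{k-1}=\maj_k$ on words of length at most $k$ --- there are no $(k-1)$- or $k$-descents when $n<k$, and for $n=k$ the unique distance-$(k-1)$ pair $(1,k)$ is weighted $1$ both as a $(k-1)$-descent and as the one inversion pair in $\Inv_k\setminus\Inv_{k-1}$, all shorter inversion pairs being common to $\Inv_{k-1}$ and $\Inv_k$.

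For the inductive step let $n>k$, write $w=w'x$ with $x=w_n$, and set $v=\phi^{(k)}(w')$. By \refeq{recursivePhi} we have $\phi^{(k)}(w)=\gamma_n^{(k)}(v)\,x$, and by the inductive hypothesis $\maj_k(v)=\maj_{k-1}(w')$, so it suffices to prove
\begin{displaymath}
  \maj_{k-1}(w'x)-\maj_{k-1}(w')\;=\;\maj_k\bigl(\gamma_n^{(k)}(v)\,x\bigr)-\maj_k(v).
\end{displaymath}
The left-hand side is local: appending $x$ creates the $(k-1)$-inversions $(i,n)$ with $n-k+2\le i\le n-1$ and $w'_i>x$, together with the $(k-1)$-descent $(n-k+1,n)$ when $w'_{n-k+1}>x$, so it depends only on how $x$ compares with the last $k-1$ letters $w'_{n-k+1},\dots,w'_{n-1}$ of $w'$. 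Since $\phi^{(k)}$ fixes the last $k-1$ letters of a word (noted in the proof of \refprop{props}), these equal $v_{n-k+1},\dots,v_{n-1}$, so I may rewrite the left-hand side in terms of $v$ and $x$.

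For the right-hand side I would write it as $\bigl[\maj_k(\gamma_n^{(k)}(v)x)-\maj_k(vx)\bigr]+\bigl[\maj_k(vx)-\maj_k(v)\bigr]$. The second bracket is again local: the $k$-inversions $(i,n)$ with $n-k+1\le i\le n-1$ and the $k$-descent $(n-k,n)$, involving $v_{n-k},\dots,v_{n-1}$. For the first bracket, $\gamma_n^{(k)}$ fixes position $n$ and, by \refeq{Gamma-i}--\refeq{Gamma-r}, is nontrivial exactly when $x$ splits $v_{n-k},v_{n-k+1}$, in which case it transposes positions $n-k,n-k+1$ (and possibly pairs of positions strictly to the left). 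In either case \reflem{gamma} with $j=n$ gives $\maj_k\bigl(\gamma_n^{(k)}(v)\bigr)-\maj_k(v)=\varepsilon$ for some $\varepsilon\in\{-1,0,1\}$; since among positions $n-k,\dots,n-1$ only the entries at $n-k$ and $n-k+1$ can change, the first bracket equals $\varepsilon$ plus the change, upon swapping those two entries, in the contributions of the pairs $(n-k,n)$ and $(n-k+1,n)$ with the final position.

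It remains to assemble these pieces. The structural point is that position $n-k$ is the single position just below the tail fixed by $\phi^{(k)}$, so $v_{n-k}$ need not match $w'_{n-k}$: the apparent mismatch --- on the image side a $k$-descent at $(n-k,n)$ plus an additional $k$-inversion at $(n-k+1,n)$, where the source side has only a $(k-1)$-descent at $(n-k+1,n)$ --- is exactly cancelled by whether $\gamma_n^{(k)}$ transposes $n-k,n-k+1$ and by the value of $\varepsilon$. When $x$ does not split $v_{n-k},v_{n-k+1}$ the two letters lie on the same side of $x$ and the identity drops out at once; when it does split them exactly one of $v_{n-k},v_{n-k+1}$ exceeds $x$, and substituting the corresponding value of $\varepsilon$ from \reflem{gamma} collapses the difference to zero. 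I expect this bookkeeping --- keeping straight the overlap of the appended-letter window $\{n-k,\dots,n-1\}$ with the transposition that $\gamma_n^{(k)}$ may perform at $\{n-k,n-k+1\}$, plus the boundary behaviour near $n=k$ --- to be the only delicate step; the rest is Foata's induction verbatim.
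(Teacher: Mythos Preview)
Your proposal is correct and follows essentially the same route as the paper: induct on the length, use the recursion $\phi^{(k)}(w'x)=\gamma_n^{(k)}(\phi^{(k)}(w'))\,x$, invoke \reflem{gamma} for the single new map $\gamma_n^{(k)}$, and finish with the fact that $\phi^{(k)}$ fixes the last $k-1$ letters. Your intermediate insertion of $\maj_k(vx)$ is just an algebraic reshuffling of the paper's chain of equalities, and the ``delicate bookkeeping'' you anticipate is exactly the four-case split on how $x$ compares to $v_{n-k},v_{n-k+1}$ that the paper writes out.
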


\begin{proof}
  The result is clear for a words of length $\leq k$. We proceed by
  induction, assuming the result for words of length $n-1$.  Let $w$
  be a word of length $n-1$ and $x$ a letter. To simplify notation,
  let
  $$
  u = \gamma_{n}^{(k)} \left( \phi^{(k)}(w) \right).
  $$
  By expanding the definition of $\maj_k$ and applying
  \reflem{gamma}, we have
  \begin{displaymath}
    \begin{array}{l}
    \maj_k \left(\phi^{(k)}(wx)\right) \\
    \hspace{2em} = \maj_k(ux) \\ 
    \hspace{2em} = \maj_k(u) + \# \{ i > n-k \; | \; u_i > x \}
        + \left\{ \begin{array}{rl}
        0 & \mbox{if} \; x \geq u_{n-k} \\
        n\!-\!k & \mbox{if} \; u_{n-k} > x \end{array} \right. \\
    \hspace{2em} = \maj_k(u) + \# \{i>n-k+1 \; | \; u_i > x\} 
        + \left\{ \begin{array}{rcrl}
        n\!-\!k + 1 & \mbox{if} & u_{n-k} > x,    & u_{n-k+1} > x \\
        0       + 0 & \mbox{if} & x \geq u_{n-k}, & x \geq u_{n-k+1} \\
        n\!-\!k + 0 & \mbox{if} & u_{n-k} > x,    & x \geq u_{n-k+1} \\
        0       + 1 & \mbox{if} & x \geq u_{n-k}, & u_{n-k+1} > x
      \end{array} \right. \\
    \hspace{2em} = \maj_k \left( \gamma_{n}^{(k)}(u) \right)
    + \# \{i>n-k+1 \; | \; u_i > x\} 
        + \left\{ \begin{array}{rl}
        n\!-\!k\!+\!1 + 0 & \mbox{if} \; u_{n-k},u_{n-k+1} > x \\
        0             + 0 & \mbox{if} \; x \geq u_{n-k},u_{n-k+1} \\
        n\!-\!k\      + 1 & \mbox{if} \; u_{n-k} > x \geq u_{n-k+1} \\
        1             - 1 & \mbox{if} \; u_{n-k+1} > x \geq u_{n-k}
      \end{array} \right. \\
    \hspace{2em} = \maj_{k-1} \left( \phi^{(k)}(w) \right) 
          + \# \{i>n-k+1 \; | \; u_i > x\}
        + \left\{ \begin{array}{rl}
        0 & \mbox{if} \; x \geq u_{n-k} \\
        n\!-\!k\!+\!1 & \mbox{if} \; u_{n-k} > x
      \end{array} \right.
    \end{array}
  \end{displaymath}
%% \begin{eqnarray*}
%%    \maj_k \left(\phi^{(k)}(wx)\right) 
%%    & = & \maj_k(ux) \\ 
%%    & = & \maj_k(u) + \# \{ i > n-k \; | \; u_i > x \} \\
%%    & & \ \ \ + \left\{ \begin{array}{rl}
%%        0 & \mbox{if} \; x \geq u_{n-k} \\
%%        n\!-\!k & \mbox{if} \; u_{n-k} > x \end{array} \right. \\
%%    & = & \maj_k(u) + \# \{i>n-k+1 \; | \; u_i > x\} \\
%%    & & \ \ \ + \left\{ \begin{array}{rcrl}
%%        n\!-\!k + 1 & \mbox{if} & u_{n-k} > x,    & u_{n-k+1} > x \\
%%        0       + 0 & \mbox{if} & x \geq u_{n-k}, & x \geq u_{n-k+1} \\
%%        n\!-\!k + 0 & \mbox{if} & u_{n-k} > x,    & x \geq u_{n-k+1} \\
%%        0       + 1 & \mbox{if} & x \geq u_{n-k}, & u_{n-k+1} > x
%%      \end{array} \right. \\
%%    & = & \maj_k \left( \gamma_{n}^{(k)}(u) \right)
%%    + \# \{i>n-k+1 \; | \; u_i > x\} \\
%%    & & \ \ \ + \left\{ \begin{array}{rl}
%%        n\!-\!k\!+\!1 + 0 & \mbox{if} \; u_{n-k},u_{n-k+1} > x \\
%%        0             + 0 & \mbox{if} \; x \geq u_{n-k},u_{n-k+1} \\
%%        n\!-\!k\      + 1 & \mbox{if} \; u_{n-k} > x \geq u_{n-k+1} \\
%%        1             - 1 & \mbox{if} \; u_{n-k+1} > x \geq u_{n-k}
%%      \end{array} \right. \\
%%    & = & \maj_{k-1} \left( \phi^{(k)}(w) \right) 
%%          + \# \{i>n-k+1 \; | \; u_i > x\} \\
%%    & & \ \ \ + \left\{ \begin{array}{rl}
%%        0 & \mbox{if} \; x \geq u_{n-k} \\
%%        n\!-\!k\!+\!1 & \mbox{if} \; u_{n-k} > x
%%      \end{array} \right.
%%  \end{eqnarray*}

  Recall from \refprop{props} that for $i \geq n-k+2$, $u_i = w_i$,
  and so
  $$
  \{i>n-k+1 \; | \; u_i > x\} \; = \; \{i>n-k+1 \; | \; w_i > x\} .
  $$
  Furthermore, since $\phi^{(k)}(w)_{n-k+1} = w_{n-k+1}$, we also have
  $$
  u_{n-k} \leq x \; \Leftrightarrow \; w_{n-k+1} \leq x .
  $$

  Continuing from the above equation using these two facts and the
  inductive hypothesis, we have
  \begin{displaymath}
    \maj_k \left(\phi^{(k)}(wx)\right) 
    = \maj_{k-1}(w) 
    + \# \{i>n-k+1 \; | \; w_i > x\} + \left\{ \begin{array}{rl}
        0 & \mbox{if} \; x \geq w_{n-k+1} \\
        n\!-\!k\!+\!1 & \mbox{if} \; w_{n-k+1} > x
      \end{array} \right.
  \end{displaymath}
  which is exactly $\maj_{k-1} (wx)$, as desired.
\end{proof}

For $1 \leq h < i$, we can compose these bijections to form the
bijection
\begin{equation}
  \phi^{[i,h]} = \phi^{(i)} \circ \cdots \circ \phi^{(h+1)}
\end{equation}
satisfying $\maj_h (w) = \maj_i \left( \phi^{[i,h]}(w) \right)$.  In
particular, $\phi^{[k,1]}$ provides a bijective proof of the
following.

\begin{theorem}
  Let $W_M$ be the set of words on a multiset $M$ with a fixed
  $\emptyset$ positions. Then for $k \geq 1$,
  \begin{eqnarray*}
    \sum_{w \in W_M} q^{\maj(w)} & = &
    \sum_{w \in W_M} q^{\maj_k(w)} .
  \end{eqnarray*}
  That is to say, the $k$-major index has Mahonian distribution.
\label{thm:mahonian}
\end{theorem}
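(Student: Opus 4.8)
The plan is to derive Theorem~\ref{thm:mahonian} directly from the family of bijections already constructed, reducing it to a telescoping argument. First I would invoke Proposition~\ref{prop:majk}, which gives for each $k \geq 2$ a bijection $\phi^{(k)}$ on $W_M$ (the set of words on $M$ with the $\emptyset$ positions fixed) satisfying $\maj_{k-1}(w) = \maj_k(\phi^{(k)}(w))$. Note that Proposition~\ref{prop:props}(i) guarantees each $\phi^{(k)}$ is a bijection of $W_M$ to itself, since it preserves the underlying multiset and the $\emptyset$ positions. Composing, the map $\phi^{[k,1]} = \phi^{(k)} \circ \cdots \circ \phi^{(2)}$ is a bijection on $W_M$, and iterating the identity of Proposition~\ref{prop:majk} yields $\maj_1(w) = \maj_k(\phi^{[k,1]}(w))$ for every $w \in W_M$.

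Next I would recall from \refsec{stats} that $\maj_1 = \maj$, so the previous identity reads $\maj(w) = \maj_k(\phi^{[k,1]}(w))$. Since $\phi^{[k,1]}$ is a bijection of $W_M$, the change of variables $w' = \phi^{[k,1]}(w)$ gives
\begin{displaymath}
  \sum_{w \in W_M} q^{\maj(w)} = \sum_{w \in W_M} q^{\maj_k(\phi^{[k,1]}(w))} = \sum_{w' \in W_M} q^{\maj_k(w')},
\end{displaymath}
which is exactly the claimed equidistribution. The case $k = 1$ is trivial since then $\maj_k = \maj$. This also recovers MacMahon's classical result when one takes $k$ large enough (specifically $k \geq |M|$), since then $\maj_k = \inv$ on words of that length; thus the theorem simultaneously proves the statistic is Mahonian in the literal sense of the term.

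There is essentially no obstacle remaining at this stage: all the combinatorial content has been absorbed into Proposition~\ref{prop:majk} and the verification (via Lemma~\ref{lem:gamma}) that $\phi^{(k)}$ behaves correctly on the $k$-major index. The only points requiring a line of care are (a) confirming that composing bijections on $W_M$ stays within $W_M$ — which is immediate from Proposition~\ref{prop:props}(i), as each $\phi^{(k)}$ fixes both the multiset of letters and the $\emptyset$ positions — and (b) noting that the recursion in Proposition~\ref{prop:majk} telescopes cleanly because the intermediate statistics $\maj_1, \maj_2, \ldots, \maj_k$ are chained end to end with no leftover terms. Both are routine, so the proof is short.
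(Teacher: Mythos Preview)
Your proposal is correct and matches the paper's approach exactly: the paper defines the composite $\phi^{[k,1]} = \phi^{(k)} \circ \cdots \circ \phi^{(2)}$, notes from \refprop{majk} that $\maj_1(w) = \maj_k(\phi^{[k,1]}(w))$, and states that this bijection proves the theorem. Your write-up spells out the telescoping and the change of variables more explicitly, but the argument is the same.
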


%%%%%%%%%%%%%%%%%%%%%%%%%%%%%%%%%%%%%%%%%%%%%%%%%%%%%%%%%%%%
\section{Extending the $k$-major index to tableaux}
%%%%%%%%%%%%%%%%%%%%%%%%%%%%%%%%%%%%%%%%%%%%%%%%%%%%%%%%%%%%
\label{sec:tableau}

In \cite{HaSt2006}, Haglund and Stevens define an inversion number for
standard tableaux which is equidistributed with the major
index. Therefore it is natural to try to extend the $k$-major index
statistic to tableaux in a similar manner. However, to do this, we
must first define $\Des_k$ for standard Young tableaux.

Consider the possible relative positions of $i$ and $i+k$ in a
standard Young tableau $T$.  Since $i < i+k$, $i$ must lie strictly
west or strictly south of $i+k$. If $i$ lies strictly west and weakly
north of $i+k$, then the pair $(i,i+k)$ should not count as a
$k$-descent.  Conjugately, if $i$ lies strictly south and weakly east
of $i+k$, then the pair $(i,i+k)$ should count as a $k$-descent. The
difficulty arises in how to resolve the situation where $i$ lies
strictly southwest of $i+k$. The approach given in \cite{HaSt2006} is
quite involved as it is based on {\em inversion paths} which must be
computed iteratively. In most cases, interchanging even two
consecutive entries in a tableau completely alters the inversion paths
in an opaque way. Therefore we begin at the other extreme, though
below we succeed only up to $k=3$.

For $k=2$, the ambiguous case when $i$ lies strictly southwest of
$i+2$ cannot arise in a standard tableaux. However, for $k=3$ we must
decide whether $(i-3,i)$ is a $3$-descent when $i-3,i-2,i-1,i$ appear
in a $2\times 2$ box in $T$. For reasons that will be made clear, we
resolve the situations as indicated in \reffig{3Des}.

\begin{figure}[ht]
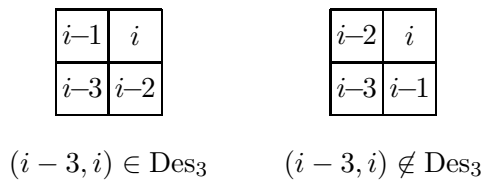

  \begin{center}
    \begin{displaymath}
      \begin{array}{\ccii\ccii}
        \bigtableau{i\!\!-\!\!1 & i \\ i\!\!-\!\!3 & i\!\!-\!\!2} & 
        \bigtableau{i\!\!-\!\!2 & i \\ i\!\!-\!\!3 & i\!\!-\!\!1} \\[2\vsp]
        (i-3,i) \in \Des_3 & (i-3,i) \not\in \Des_3
      \end{array}
    \end{displaymath}      
    \caption{\label{fig:3Des} Ambiguous cases for whether $(i-3,i)$
      should constitute a $3$-descent.}
  \end{center}    
\end{figure}

To simplify notation, we introduce the following terminology. For
$i<n$, say that {\em i attacks n} if $i$ lies strictly south and
weakly east of $n$ or if $i$ lies strictly southwest of $n$ and $i+1$
attacks $n$. 

\begin{definition}
  For $T$ a standard tableau, $k \leq 3$, define the {\em $k$-descent
    set of $T$}, denoted $\Des_{k}(T)$, by
  \begin{displaymath}
    \Des_k(T) = \{ (i,i+k) \; | \; \mbox{$i$ attacks $i+k$} \}, 
  \end{displaymath}  
  define the {\em set of $k$-inversions of $T$}, denoted
  $\Inv_k(T)$, by
  \begin{displaymath}
    \Inv_k(T) = \bigcup_{j < k} \Des_j(T) ,
  \end{displaymath}
  and finally define the {\em $k$-major index of $T$}, denoted
  $\maj_k(T)$, by
  \begin{displaymath}
    \maj_k (T) \; = \; \left| \Inv_k(T) \right| + \sum_{(i,i+k) \in
      \Des_k(T)} i .
  \end{displaymath}
\label{defn:Tstats}
\end{definition}

Note that for defining $k$-inversions we made use of the alternate
description of $k$-inversions for words given in \refeq{alt-Invk}. For
the example in \reffig{tableau}, we have $\Des_2 = \{(3,5), \ (4,6), \
(6,8)\}$, $\Inv_2 = \{(1,2), \ (4,5), \ (7,8)\}$ and so $\maj_2 = 3 +
3 + 4 + 6 = 16$.

Parallel to \refsec{bijections}, we aim to generalize
\refthm{mahonian} to tableaux by constructing bijections $\Phi^{(k)}$,
$k=2,3$, on standard Young tableaux of fixed shape such that
\begin{equation}
\maj_{k-1}(T) = \maj_k(\Phi^{(k)}(T)) .
\end{equation}

The first task, then, is to define the set $\Gamma_{j}^{(k)}$. Here
care must be taken when determining when the ``splitting'' condition
is satisfied. As a minimum requirement, since the intention is to
interchange $i$ and $i+1$, we must ensure that we do this only if $i$
and $i+1$ do not appear in the same row or column. This motivates the
decision in \reffig{3Des} as well as the following definitions. 

Say that $n$ {\em splits} $a,b$ if exactly one of $a,b$ attacks
$n$. For $k=2,3$, define $\Gamma_{j}^{(k)}$ by
\begin{displaymath}
  j-k \in \Gamma_{j}^{(k)}(T) \;\; \mbox{if} \;\; \mbox{$j$ splits the pair
    $j-k,j-k+1$},
\end{displaymath}
and if $i \in \Gamma_{j}^{(k)}(T)$, then
\begin{displaymath}
  i-k \in \Gamma_{j}^{(k)}(T) \;\; \mbox{if} \;\; \mbox{exactly one
    of}\; i,i+1 \; \mbox{splits the pair}\; i-k,i-k+1.
\end{displaymath}

By the definition of attacking, both or neither $i,i+1$ attack $n$
whenever $i$ is strictly southwest of $n$. Therefore in order for $n$
to split $i,i+1$, one must lie strictly south and weakly east of $n$,
and the other must lie weakly north of $n$. It follows, then, that if
$i$ and $i+1$ lie in the same row or column of $T$, then $n$ does not
split $i,i+1$ for any $n$. 

Let permutations act on standard fillings of a Young diagram by
permuting the entries. While this is not, in general, a well-defined
action on tableaux, the following application in fact is. For $k=2,3$,
define $\gamma_{j}^{(k)}$ by
\begin{equation}
  \gamma_{j}^{(k)}(T) \; = \; \left( \prod_{i \in \Gamma_{j}^{(k)}(T)}
    (i, i+1) \right) \cdot T .
\label{eqn:gammak-T}
\end{equation}
That is, $\gamma_{j}^{(k)}$ interchanges $i$ and $i+1$ for all $i \in
\Gamma_{j}^{(k)}(T)$. As before, $\gamma_j^{(k)}$ is the identity for
$j \leq k$.

If $i$ is strictly southwest of $n$, then $n$ cannot split the pair
$i,i+1$. It follows that $\Gamma^{(k)}_j(T) =
\Gamma^{(k)}_j(\gamma^{(k)}_j(T))$; in particular, $\gamma^{(k)}_j$ is
an involution. For $k=2,3$, define a bijection $\Phi^{(k)}$ on
tableaux of a fixed shape by
\begin{equation}
  \Phi^{(k)} (T) \; = \; \gamma_{n}^{(k)} \circ \gamma_{n-1}^{(k)}
  \circ \cdots \circ \gamma_{1}^{(k)} (T).
\label{eqn:phi2-T}
\end{equation}
For the example in \reffig{Phi2}, observe that $\maj_1(T) = 16 =
\maj_2(\Phi^{(2)}(T))$.

\begin{figure}[ht]
  \begin{center}
    \begin{displaymath}
      \begin{array}{\cciii\cciii c}
        T \ = \ \rn{l}{\tableau{8 \\ 2 & 4 & 6 \\ 1 & 3 & 5 & 7}} &
        \rn{c}{\tableau{8 \\ 3 & 4 & 6 \\ 1 & 2 & 5 & 7}} &
        \rn{r}{\tableau{8 \\ 2 & 5 & 6 \\ 1 & 3 & 4 & 7}} \ = \ \Phi^{(2)}(T)
      \end{array}
      \psset{nodesepA=3pt,nodesepB=5pt,linewidth=.1ex}
      \ncline{->}  {l}{c} \naput{\gamma^{(2)}_{4}}
      \ncline{->}  {c}{r} \naput{\gamma^{(2)}_{6}}
    \end{displaymath}
    \caption{\label{fig:Phi2}An example of $\Phi^{(2)}$; here
      $\gamma^{(2)}_{j} = \mathrm{id}$ for $j\neq 4,6$.}
  \end{center}    
\end{figure}

Similar to before, the inverse of $\Phi^{(k)}$ is given by composing
the maps $\gamma^{(k)}_j$ in the reverse order. This establishes the
analogue of the property (i) of \refprop{props}, and the analogue of
property (ii) is that the largest letter of $T$ is fixed by
$\Phi^{(k)}$. As property (iii) has no real analogue in this setting, we
move on to the more important statement observed in the example,
namely the analogue of \refprop{majk} below.

\begin{proposition}
  For $T$ a standard Young tableau and $k=2,3$, we have
  $\displaystyle{\maj_{k-1} (T) \; = \; \maj_k \left( \Phi^{(k)}(T)
    \right)}$.
\label{prop:majk-T}
\end{proposition}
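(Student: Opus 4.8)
The plan is to mimic exactly the proof of \refprop{majk}, exploiting the fact that the definitions of $\Gamma_j^{(k)}$, $\gamma_j^{(k)}$, and $\Phi^{(k)}$ on tableaux have been engineered to parallel the word case verbatim. First I would establish the tableau analogue of \reflem{gamma}: for $T$ a standard Young tableau and $j \le n$, $k \in \{2,3\}$,
\begin{displaymath}
  \maj_k\!\left(\gamma_j^{(k)}(T)\right) = \maj_k(T) +
  \begin{cases}
    +1 & \text{if $j-k$ attacks $j$ but $j-k+1$ does not,}\\
    -1 & \text{if $j-k+1$ attacks $j$ but $j-k$ does not,}\\
    0 & \text{otherwise.}
  \end{cases}
\end{displaymath}
The proof follows the induction along $\Gamma_j^{(k)}(T) = \{\,j-k > i_1 > i_2 > \cdots\,\}$ used for words: if $j-k \notin \Gamma_j^{(k)}(T)$ then $\gamma_j^{(k)}(T)=T$ and there is nothing to show; otherwise set $i=j-k$ and argue that interchanging $i$ with $i+1$ changes the attacking relations among the pairs $(i-k,i-k+1)$, $(i-k+1,i)$ and the potential $k$-descents $(i-k,i)$, $(i-k+1,i+1)$ in a way whose net contribution to $|\Inv_k| + \sum_{\Des_k} i$ telescopes down the chain, leaving only the $\pm1$ from the bottom of the chain. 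The $k=2$ case is genuinely easy because the ambiguous $2\times 2$ configuration does not occur in a standard tableau, so "attacks'' reduces to "lies strictly south and weakly east''. The $k=3$ case is where the convention of \reffig{3Des} must be used: one checks, in the $2\times2$-box configurations, that the assignment there is precisely what makes the $k$-descent/$k$-inversion bookkeeping cancel correctly — this is the "reasons that will be made clear'' promised in the text.

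Once the lemma is in hand, the proposition itself follows by the same induction on the number of cells as \refprop{majk}. I would use the recursive description $\Phi^{(k)}(T)$ obtained by first stripping the cell containing $n$, applying $\Phi^{(k)}$ to the standard tableau $T'$ on the remaining $n-1$ cells, re-inserting $n$, and then applying $\gamma_n^{(k)}$; since $\gamma_j^{(k)}=\mathrm{id}$ for $j \le k$ and, more importantly, $\Phi^{(k)}$ fixes the position of the largest letter (the analogue of property (ii)), this re-insertion is unambiguous and parallels \refeq{recursivePhi}. Setting $u = \gamma_n^{(k)}(\Phi^{(k)}(T'))$, I would expand $\maj_k(u)$ in terms of $\maj_k(\Phi^{(k)}(T'))$ by splitting off the contribution of the cell $n$ — namely $\#\{\,i > n-k : i \text{ attacks } n\,\}$ for the $k$-inversion part and $(n-k)\cdot[\,n-k \text{ attacks } n\,]$ for the $k$-descent part — then re-group into the two "positions'' $n-k$ and $n-k+1$ exactly as in the four-way case split of \refprop{majk}, apply the lemma to pass from $u$ to $\Phi^{(k)}(T')$, and invoke the inductive hypothesis $\maj_k(\Phi^{(k)}(T')) = \maj_{k-1}(T')$. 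Finally, using that $\Phi^{(k)}$ fixes the cells containing $n-k+2, \dots, n$ and fixes the position of $n-k+1$ (the tableau analogues of the facts $u_i = w_i$ for $i \ge n-k+2$ and $\phi^{(k)}(w)_{n-k+1} = w_{n-k+1}$ used at the end of \refprop{majk}), the bookkeeping collapses to $\maj_{k-1}(T)$, as desired.

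The main obstacle I anticipate is entirely in the case $k=3$ of the lemma: verifying that interchanging $i$ and $i+1$ produces the claimed change in $\maj_3$ requires a careful case analysis over the relative positions of $i-3, i-2, i-1, i, i+1$ in the tableau, and in particular over when $i$ or $i+1$ lies strictly southwest of another entry (so that "attacks'' propagates recursively). Unlike the word case, where "splits'' is a simple inequality, here one must confirm that the recursive definition of "attacks'' is consistent under the swap — this is exactly where the observation in the text that "both or neither of $i,i+1$ attack $n$ whenever $i$ is strictly southwest of $n$'' does the heavy lifting, guaranteeing that $\gamma_j^{(k)}$ never moves an entry into a configuration that destroys standardness and that the attacking relations of the unaffected entries $w_h$, $i-k+1 < h < i$, are genuinely unchanged. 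I would organize this as a finite table of configurations, mirroring the "eight possible scenarios'' phrasing of \reflem{gamma}, and check the three displayed equivalences (the flip of the $k$-inversion $(i-k,i-k+1)$, and the interchange of the two potential $k$-descents) hold in each, with the \reffig{3Des} convention being precisely the choice that makes the count work when $i-3,i-2,i-1,i$ form a $2\times 2$ block.
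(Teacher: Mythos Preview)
Your overall strategy---translate \reflem{gamma} and \refprop{majk} to tableaux via the dictionary ``$w_i > w_n$'' $\leftrightarrow$ ``$i$ attacks $n$''---is exactly the paper's. The one substantive point where you diverge is in how you plan to handle the $2\times 2$ block for $k=3$. You anticipate a case analysis in which the \reffig{3Des} convention is ``precisely the choice that makes the $k$-descent/$k$-inversion bookkeeping cancel correctly.'' That is not what happens. If $i-2$ and $i-1$ were ever swapped inside a $2\times 2$ block containing $i-3,i-2,i-1,i$, the relation ``$i-3$ attacks $i$'' between the two \emph{unmoved} entries would flip (this is the content of \reffig{3Des}), and the word-proof structure---which relies on $k$-descents and $k$-inversions among unmoved entries being preserved---would genuinely break, not merely require a different cancellation.

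The paper's resolution is cleaner: that swap simply cannot occur. In either configuration of \reffig{3Des}, no $n$ can split the pair $i-2,i-1$, because (by the remark you quoted) splitting requires one of the pair to lie strictly south and weakly east of $n$ and the other weakly north of $n$, and the two cells $(1,2),(2,1)$ of the block admit no such $n$ outside the block. Hence $i-2\notin\Gamma_j^{(3)}(T)$ for every $j$, the dangerous configuration is never disturbed, and the proofs of \reflem{gamma} and \refprop{majk} carry over verbatim with no configuration table needed. Your case analysis would eventually uncover this vacuity, but it is worth isolating as the single observation that makes the translation go through.
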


\begin{proof}
  We use the proofs of \reflem{gamma} and \refprop{majk}. For this to
  make sense, we make the substitution that for $i < n$, $w_i > w_n$
  should be interpreted as ``i attacks n'' and similarly $w_i \leq
  w_n$ should be interpreted as ``i does not attack n''. In order for
  the arguments to remain valid under this translation, interchanging
  entries using $\gamma_j^{(k)}$ may not change $k$-inversions or
  $k$-descents between unmoved entries. The only potential violation
  of this is the potential $3$-descent between $i-3$ and $i$ in the
  situations depicted in \reffig{3Des}. However, in either case $i-2
  \not\in \Gamma^{(3)}_j$ since neither $i+1$ nor $i+2$ can split the
  pair $i-2, i-1$. Therefore, with this translation, the proofs carry
  through verbatim.
\end{proof}

\begin{theorem}
  For $\lambda$ a partition, we have
  \begin{equation}
    \sum_{T \in \SYT(\lambda)} q^{\maj(T)} =
    \sum_{T \in \SYT(\lambda)} q^{\maj_{2}(T)} =
    \sum_{T \in \SYT(\lambda)} q^{\maj_{3}(T)} .
  \end{equation}
  \label{thm:mahonian-T}
\end{theorem}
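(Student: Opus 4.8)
The plan is to derive Theorem \ref{thm:mahonian-T} directly from the machinery already built, in exactly the way Theorem \ref{thm:mahonian} follows from Proposition \ref{prop:majk}. By Proposition \ref{prop:majk-T}, for each partition $\lambda$ the map $\Phi^{(2)}$ is a bijection on $\SYT(\lambda)$ (it is a bijection on standard fillings of the fixed shape, with inverse obtained by composing the $\gamma^{(k)}_j$ in reverse order, as noted after \refeq{phi2-T}, and it carries standard tableaux to standard tableaux because each $\gamma^{(k)}_j$ does) satisfying $\maj(T) = \maj_1(T) = \maj_2(\Phi^{(2)}(T))$. Summing $q^{\maj_2(\cdot)}$ over $\SYT(\lambda)$ and reindexing the sum along the bijection $\Phi^{(2)}$ yields the first equality. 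Likewise $\Phi^{(3)}$ is a bijection on $\SYT(\lambda)$ with $\maj_2(T) = \maj_3(\Phi^{(3)}(T))$, giving the second equality. Composing, $\Phi^{(3)} \circ \Phi^{(2)}$ is a bijection on $\SYT(\lambda)$ taking $\maj$ to $\maj_3$, which records the full chain of identities.

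The one point that genuinely requires care — and the step I expect to be the main obstacle — is confirming that $\Phi^{(k)}$ really does map $\SYT(\lambda)$ into itself for $k=2,3$, i.e.\ that applying the transpositions prescribed by $\Gamma^{(k)}_j$ never destroys row- or column-standardness. This is precisely the content of the remark preceding \refeq{gammak-T}: if $i$ and $i+1$ lie in the same row or the same column of $T$, then $n$ cannot split the pair $i,i+1$ for any $n$, because splitting requires one of $i,i+1$ to lie strictly south and weakly east of $n$ while the other lies weakly north of $n$, a configuration impossible for two cells sharing a row or column. Hence $i \in \Gamma^{(k)}_j(T)$ forces $i$ and $i+1$ into distinct rows and distinct columns, so interchanging them leaves a valid standard tableau; since $\Phi^{(k)}$ is a composition of such moves, it preserves $\SYT(\lambda)$. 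I would state this explicitly at the start of the proof and then invoke Proposition \ref{prop:majk-T}.

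The remaining bookkeeping is routine: one checks that $\maj_1 = \maj$ on tableaux, which is immediate from comparing \refeq{majT} with Definition \ref{defn:Tstats} (for $k=1$, $\Inv_1(T) = \bigcup_{j<1}\Des_j(T) = \varnothing$, and ``$i$ attacks $i+1$'' means $i$ lies strictly south and weakly east of $i+1$, which for consecutive-value cells is exactly ``$i$ lies strictly south of $i+1$,'' matching \refeq{DesT}). With that identification in hand, the theorem is just the two applications of Proposition \ref{prop:majk-T} strung together, so the write-up should be only a few lines.
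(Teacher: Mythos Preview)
Your proposal is correct and matches the paper's approach exactly: the paper states Theorem~\ref{thm:mahonian-T} with no separate proof, treating it as an immediate consequence of Proposition~\ref{prop:majk-T} together with the already-established bijectivity of $\Phi^{(k)}$ on $\SYT(\lambda)$. You have simply written out the details the paper leaves implicit, including the verification that $\Phi^{(k)}$ preserves standardness and that $\maj_1=\maj$ on tableaux.
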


Unfortunately, \refthm{mahonian-T} is the best we can do towards
extending \refthm{mahonian} using this direct analogue of
$\phi^{(k)}$. This technique breaks down at $k=4$ for the shape
$(2,2,2)$. In this case, the $6$ must lie in the northeast corner and
will necessarily interchange the $2$ and $3$ if they both lie in the
first two rows. Then if $1,2,3,4$ occupy the first two rows, this
changes whether $(1,4)$ is a $3$-descent ($4$-inversion). In order to
overcome this shortfall, either we must adopt a more dynamic notion of
$k$-inversions as in the Haglund-Stevens approach or a more
complicated bijection.

%%%%%%%%%%%%%%%%%%%%%%%%%%%%%%%%%%%%%%%%%%%%%%%%%%%%%%%%%%%%
\section{Connections with Macdonald polynomials}
%%%%%%%%%%%%%%%%%%%%%%%%%%%%%%%%%%%%%%%%%%%%%%%%%%%%%%%%%%%%
\label{sec:macdonald}

The $k$-major index statistic was rediscovered in the author's study
of Macdonald polynomials. In this section we connect the results of
\refsec{bijections} back to Macdonald polynomials.

In \cite{HHLRU2005}, Bylund and Haiman introduced the $k$-inversion
number of a $k$-tuple of tableaux to be the number of inversions
between certain entries, and it is shown that this statistic may be
used to give an alternative definition for Lascoux-Leclerc-Thibon
polynomials. In particular, when each shape of the $k$-tuple is a
ribbon, i.e. contains no $2 \times 2$ block, the $k$-inversion number
of the $k$-tuple is exactly $|\Inv_k(w)|$ where $w$ is a certain
reading word of the $k$-tuple. In further study of these objects
\cite{Assaf2007-2}, it became natural to associate to each $k$-tuple
not only the $k$-inversion number, but also a $k$-descent set. Again,
when the shapes of the $k$-tuple in question are all ribbons, this is
exactly given by $\Des_k(w)$ for the same reading word $w$. Here it is
essential that $w$ be allowed to contain $\emptyset$'s in order to
correctly space the entries of the $k$-tuple.

The case when the $k$-tuple consists entirely of ribbons is an
important special case in light of \cite{Haglund2004,HHL2005} where it
is shown that the Macdonald polynomials are in fact positive sums of
LLT polynomials where the shapes are ribbons. In this context, the
index $k$ is given by the number of columns of the indexing partition
of the Macdonald polynomial. The Macdonald Positivity Theorem,
conjectured by Macdonald in 1988 \cite{Macdonald1988}, was first
proved by Haiman using algebraic geometry \cite{Haiman2001}, and more
recently by Grojnowski and Haiman using Kazhdan-Lusztig theory
\cite{GrHa2007} and the author using a purely combinatorial argument
\cite{Assaf2007-2}. This latter proof, while purely combinatorial,
relies on new combinatorial machinery, namely {\em dual equivalence
  graphs}, involving rather technical proofs of the main
theorems. Below we suggest how Macdonald positivity may be recovered
in a completely elementary way using bijections similar to
$\phi^{(k)}$.

The main idea behind \cite{Assaf2007-2} is to group together terms of
a Macdonald polynomial which contribute to a single Schur function and
have the same associated statistics. This is done in three steps; for
complete details, see \cite{Assaf2007-3}. First, quasisymmetric
functions are used to reduce to standard words, i.e. permutations, and
it is here that the inverse descent set of a permutation is
relevant. Next, for a given $k$, the permutations are divided into
equivalence classes (in the language of \cite{Assaf2007-2}, connected
components of a graph) using the following involutions.

For $i \geq 2$, define involutions $d_i$ and $\tilde{d}_i$ on permutations
where $i$ does not lie between $i-1$ and $i+1$ by
\begin{eqnarray}
  d_i    (\cdots\;   i  \;\cdots\;i\pm 1\;\cdots\;i\mp 1\;\cdots ) 
  & = &  \cdots\;i\mp 1\;\cdots\;i\pm 1\;\cdots\;  i   \;\cdots \; ,
  \label{eqn:d} \\
  \tilde{d}_i(\cdots\;   i  \;\cdots\;i\pm 1\;\cdots\;i\mp 1\;\cdots ) 
  & = &  \cdots\;i\pm 1\;\cdots\;i\mp 1\;\cdots\;  i   \;\cdots \; ,
  \label{eqn:dwig}
\end{eqnarray}
where all other entries remain fixed. Combining these, define
$D^{(k)}_i$ by
\begin{equation}
  D^{(k)}_i(w) \; = \; \left\{
    \begin{array}{ll}
      d_i(w) & \mbox{if} \;\; \dist(i-1,i,i+1) > k \\
      \tilde{d}_i(w) & \mbox{if} \;\; \dist(i-1,i,i+1) \leq k
    \end{array} \right. ,
  \label{eqn:Dk}
\end{equation}
where $\dist(i-1,i,i+1)$ is the maximum distance between the positions
of $i-1,i,i+1$ in $w$. The $\emptyset$'s, or spacers, in $w$ are
essential for this step as they adjust the relative distance of the
letters of $w$.

\begin{definition}
  Call two permutations $w$ and $u$ {\em $k$-equivalent}, denoted
  $w \sim_{k} u$, if $w = D^{(k)}_{i_1} D^{(k)}_{i_2} \cdots
  D^{(k)}_{i_m}(u)$ for some sequence $i_1,i_2,\ldots,i_m \geq 2$.
\end{definition}

\begin{figure}[ht]
  \begin{center}
    \begin{displaymath}
      \begin{array}{\ccr \ccr \ccr \ccr c}
        \mbox{$1$-classes:} &
        \left\{ 1 \ 2 \ 3 \right\}; &
        \left\{ 2 \ 1 \ 3 \ , \ 3 \ 1 \ 2 \right\}; &
        \left\{ 2 \ 3 \ 1 \ , \ 1 \ 3 \ 2 \right\}; &
        \left\{ 3 \ 2 \ 1 \right\} \\[.3\vsp]
        \mbox{$2$-classes:} &
        \left\{ 1 \ 2 \ 3 \right\}; &
        \left\{ 2 \ 1 \ 3 \ , \ 1 \ 3 \ 2 \right\}; &
        \left\{ 2 \ 3 \ 1 \ , \ 3 \ 1 \ 2 \right\}; &
        \left\{ 3 \ 2 \ 1 \right\}
      \end{array}
    \end{displaymath}      
    \caption{\label{fig:classes} Equivalence classes of permutations
      of length $3$.}
  \end{center}
\end{figure}

\begin{remark}
  Note that the $1$-equivalence classes are exactly the {\em dual
    equivalence classes} for partitions; see \cite{Haiman1992}. In
  particular, the sum of the quasisymmetric functions associated to
  the permutations in a $1$-equivalence class is a Schur function.
\label{rmk:dec}
\end{remark}

A key observation in \cite{Assaf2007-2} is that $\Des_k(w) =
\Des_k\left( D^{(k)}_{i} (w) \right)$ and $|\Inv_k(w)| = |\Inv_k\left(
  D^{(k)}_{i} (w) \right)|$.  In particular, $\Des_k$ and $|\Inv_k|$
are constant on $k$-equivalence classes. Therefore, the third and
final step toward establishing the Macdonald Positivity Theorem is to
prove that the sum over the quasisymmetric functions associated to a
given $k$-equivalence class is Schur positive. By \refrmk{dec}, a
natural approach is to relate $k$-classes to $1$-classes. Indeed, the
proof presented in \cite{Assaf2007-2} does this by showing that a
connected component of the graph for $k$-columns (a $k$-equivalence
class) may be broken into a union of connected dual equivalence graphs
($1$-equivalence classes). It is for this step that the proof becomes
quite technical and involved, and so the idea is to bypass the
machinery of dual equivalence graphs altogether. The following
proposition achieves this for the $2$-column/$2$-equivalence class
case.

\begin{proposition}
  For $w$ a permutation such that $i$ does not lie between $i-1$ and
  $i+1$, we have
  \begin{equation}
    \phi^{(2)} \left( D^{(1)}_i(w) \right) = D^{(2)}_i \left(
      \phi^{(2)}(w) \right) .
  \end{equation}
\label{prop:2equiv}
\end{proposition}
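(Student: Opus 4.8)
The plan is to prove the identity by carefully tracking how the two sides act on $w$, reducing everything to a local statement about the relative positions of $i-1, i, i+1$ and the letters that $\phi^{(2)}$ permutes. First I would recall that $\phi^{(2)}$ is built from the involutions $\gamma_j^{(2)}$, each of which only ever interchanges a pair $w_a, w_{a+1}$ when some later letter $w_j$ \emph{splits} the pair, and that consecutive values can never be split (Proposition~\ref{prop:props}(iii) and the discussion preceding it). In particular, $\phi^{(2)}$ never interchanges two letters whose values differ by $1$; this is the key structural fact. Since $D^{(1)}_i$ and $D^{(2)}_i$ differ only in which of $d_i$ or $\tilde d_i$ is applied — and this choice depends only on $\dist(i-1,i,i+1)$ relative to the threshold ($1$ versus $2$) — I would organize the argument around what $\phi^{(2)}$ does to the three positions occupied by $i-1, i, i+1$.

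The main steps, in order, are as follows. (1) Show that $\phi^{(2)}$ commutes with the relabeling involved in $d_i$ and $\tilde d_i$ in the following sense: because $\phi^{(2)}$ does not interchange $i$ with $i\pm1$, nor $i-1$ with $i+1$ when they are adjacent in value-context via the reading order, the positions of $\{i-1,i,i+1\}$ as a \emph{set} are affected by $\gamma_j^{(2)}$ in a way that is compatible on both sides; more precisely, I would prove that applying $\gamma_j^{(2)}$ before or after swapping the \emph{labels} $i-1,i,i+1$ according to $d_i$ (resp.\ $\tilde d_i$) yields the same word, by checking that the set $\Gamma_j^{(2)}$ is unchanged under such a label swap. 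This uses that $w_j$ splits a pair $w_a,w_{a+1}$ according to a $<$/$\geq$ comparison, and relabeling within $\{i-1,i,i+1\}$ — none of which sit adjacent in position in the relevant configurations — does not alter any of these comparisons. (2) Deduce that $\phi^{(2)}\circ d_i = d_i\circ\phi^{(2)}$ and $\phi^{(2)}\circ\tilde d_i=\tilde d_i\circ\phi^{(2)}$ on the relevant domain. (3) Finally, verify that the \emph{threshold} is respected: $\phi^{(2)}$ preserves $\dist(i-1,i,i+1)$ closely enough that $\dist > 1$ for $w$ forces the correct branch for $\phi^{(2)}(w)$ under $D^{(2)}$. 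Here is the only place the two distinct thresholds ($k=1$ versus $k=2$) interact: I expect that when $\dist_w(i-1,i,i+1)=1$, i.e.\ the three letters are in consecutive positions, $D^{(1)}_i$ uses $\tilde d_i$ while $D^{(2)}_i$ also uses $\tilde d_i$ (since $1\le 2$), so both sides apply $\tilde d_i$; and when $\dist_w > 2$, both use $d_i$. The genuinely delicate case is $\dist_w(i-1,i,i+1)=2$: then $D^{(1)}_i$ uses $d_i$ but $D^{(2)}_i$ uses $\tilde d_i$, so for the identity to hold I must show that $\phi^{(2)}$ changes this distance by exactly the right amount — namely that $\phi^{(2)}(w)$ still has the three letters at mutual distance $2$ on the $D^{(2)}$-side but that the \emph{composition} $\phi^{(2)}\circ d_i$ on the left produces the same word as $\tilde d_i\circ\phi^{(2)}$ on the right. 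In fact the resolution is cleaner: since $D^{(1)}_i(w)$ and $w$ differ only by a permutation of the labels $i-1,i,i+1$ keeping their positions fixed as a set (both $d_i$ and $\tilde d_i$ do this), and $\phi^{(2)}$ depends on relative order of \emph{values}, not on which specific label sits where outside the splitting comparisons, step (1) already forces the whole identity once one checks the branch choice matches — so step (3) reduces to comparing $\dist(i-1,i,i+1)$ in $w$ against $\dist(i-1,i,i+1)$ in $\phi^{(2)}(w)$ and confirming the threshold crossing is consistent.

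The hard part will be step (3), the threshold bookkeeping — specifically controlling how $\phi^{(2)}$ moves the three letters $i-1,i,i+1$ relative to one another when their mutual distance in $w$ is exactly $2$, since that is the unique value at which $D^{(1)}$ and $D^{(2)}$ select different involutions, and making sure that the left-hand composition $\phi^{(2)}(D^{(1)}_i(w))$ lands on precisely the word that $D^{(2)}_i$ produces from $\phi^{(2)}(w)$. I would handle this by an explicit case analysis on the positions of $i-1,i,i+1$ and the (at most two) letters strictly between the outermost two of them, using the recursive description \refeq{recursivePhi} and the fact that $\gamma_j^{(2)}$ only swaps value-adjacent-in-position pairs that are split, never a consecutive-value pair; the $\emptyset$ spacers, which $\phi^{(2)}$ fixes by Proposition~\ref{prop:props}(i), play the expected role of keeping distances aligned. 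Steps (1)–(2) I expect to be routine verifications of the "$\Gamma_j^{(2)}$ is insensitive to relabeling within $\{i-1,i,i+1\}$" claim.
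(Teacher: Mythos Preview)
Your steps (1)--(2) contain a genuine gap: the claim that $\Gamma_j^{(2)}$ is insensitive to the relabeling performed by $d_i$ is false, and consequently $\phi^{(2)}$ does \emph{not} commute with $d_i$ (nor with $\tilde d_i$) separately. Here is the concrete obstruction. Suppose $w_{t-2}=i+1$, $w_{t-1}=i-1$, $w_t=i$, so the three values occupy consecutive positions. Then $w_t=i$ splits the pair $w_{t-2},w_{t-1}$ (since $i-1\le i<i+1$), hence $t-2\in\Gamma_t^{(2)}(w)$ and $\gamma_t^{(2)}$ swaps positions $t-2,t-1$. In $\tilde w=d_i(w)$ we have $\tilde w_{t-2}=i$, $\tilde w_{t-1}=i-1$, $\tilde w_t=i+1$, and $i+1$ does not split $i,i-1$; thus $\Gamma_t^{(2)}(\tilde w)=\emptyset$. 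After this single step the two partial results are related by $\tilde d_i$, not by $d_i$. More generally, adjacency of $i-1,i,i+1$ is not a static feature of $w$: it can be created and destroyed repeatedly as the successive $\gamma_j^{(2)}$ are applied (a later $\gamma_j$ may swap $i$ into or out of the position adjacent to $i\pm1$), so the intermediate words on the two sides toggle between being related by $d_i$ and by $\tilde d_i$. Your step (3) treats this as one boundary case to be resolved at the end, which is not enough.

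The paper's argument proceeds precisely by tracking this toggling. One fixes the position $t$ of the rightmost of the three values in $w$, observes $\Gamma_j(w)=\Gamma_j(\tilde w)$ for $j<t$, and then shows that for each $j\ge t$ the sets $\Gamma_j$ of the two running words differ, if at all, only in the index of the leftmost of $i\pm1$; that single extra transposition is exactly what converts a $d_i$-relation into a $\tilde d_i$-relation or back. Hence at the end the images under $\phi^{(2)}$ differ by $d_i$ or $\tilde d_i$ according to whether $i-1,i,i+1$ are adjacent in $\phi^{(2)}(w)$, which is the definition of $D^{(2)}_i$. (A small slip in your threshold discussion: three distinct positions force $\dist\ge 2$, so $D^{(1)}_i\equiv d_i$ in every case; the only branch in question is whether $D^{(2)}_i$ on $\phi^{(2)}(w)$ equals $d_i$ or $\tilde d_i$.)
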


\begin{proof}
  First note that $D^{(1)}_i = d_i$. Furthermore, $D^{(2)}_i(w) =
  d_i(w)$ unless $i-1,i,i+1$ are adjacent in $w$. Without loss of
  generality, we may assume that $w_r = i+1$, $w_s = i-1$ and $w_t =
  i$ for some indices $r<s<t$. Set $\ww = d_1(w)$, so that $\ww_r =
  i$, $\ww_s = i-1$ and $\ww_t = i+1$. We aim to show that
  $D^{(2)}_i(\phi^{(2)}(w)) = \phi^{(2)}(\ww)$.

  For notational convenience, we write $\gamma_j$ for $\gamma^{(2)}_j$
  and $\Gamma_j$ for $\Gamma^{(2)}_j$. Since the definition of
  $\Gamma_j$ depends only on the relative orders of letters, it
  follows that $\Gamma_{j} (w) = \Gamma_{j}(\ww)$ for $j < t$. Along
  the same lines, $\Gamma_{t}(w) \neq \Gamma_{t}(\ww)$ if and only if
  $r,s = t-2,t-1$. If this is not the case, then $\gamma_t 
  \cdots  \gamma_1 (w) = d_i \left( \gamma_t  \cdots 
    \gamma_1 (\ww) \right)$ and indeed $\dist(i-1,i,i+1) > 2$ in
  $\gamma_t  \cdots  \gamma_1 (w)$. In the affirmative case,
  $\Gamma_{t}(w) = \{t-2\}$ since both or neither $i-1,i+1$ splits any
  pair of preceding letters and $\Gamma_{t}(\ww) =
  \emptyset$. Therefore $\gamma_t  \cdots  \gamma_1 (w) =
  \tilde{d}_i \left( \gamma_t  \cdots  \gamma_1 (\ww)
  \right)$ as desired since $\dist(i-1,i,i+1) = 2$ in $\gamma_t 
  \cdots  \gamma_1 (w)$.

  Now consider the effect of $\gamma_j$ for $j > t$. For the same
  reasons as before, $\Gamma_{j}(w) \neq \Gamma_{j}(\ww)$ if and only
  if $i-1,i,i+1$ are adjacent either before or after $\gamma_j$ is
  applied. For $\ww$, the relative positions of $i-1,i,i+1$ will never
  change. Moreover, the position of $i+1$ in $\ww$ tracks the position
  of $i$ in $w$, and the positions of $i,i-1$ in $\ww$ are the
  positions of $i-1,i+1$ in $w$ (though not necessarily
  respectively). For $w$, each time $i$ moves between adjacent and
  nonadjacent to $i-1,i+1$, the difference between $\Gamma_j$ for $w$
  and $\ww$ is exactly that the former contains the index of the
  leftmost of $i-1,i+1$ and the latter does not. Comparing $d_i$ with
  $\tilde{d}_i$, this is exactly the difference between the two
  involutions, i.e. $i-1$ and $i+1$ interchange positions. Therefore
  in the end, $\tilde{d}_i(\phi^{(2)}(w)) = \phi^{(2)}(\ww)$ if
  $i-1,i,i+1$ are adjacent in $\phi^{(2)}(w)$, and $d_i(\phi^{(2)}(w))
  = \phi^{(2)}(\ww)$ otherwise.
\end{proof}

Recall that the sum over of an equivalence class is determined by the
quasisymmetric functions associated to the permutation of the
class. Since the quasisymmetric function associated to a permutation
is determined by the inverse descent set of the permutation,
\refprop{props} (iii) and \refrmk{dec} establish the following
corollary to \refprop{2equiv}.

\begin{corollary}
  Macdonald polynomials indexed by partitions with $2$ columns are
  Schur positive.
\label{cor:2pos}
\end{corollary}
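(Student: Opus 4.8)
The plan is to reduce Corollary~\ref{cor:2pos} to the machinery already assembled in the excerpt, treating the bijection $\phi^{(2)}$ as a dictionary that carries the combinatorics of $2$-equivalence classes onto the combinatorics of $1$-equivalence (i.e.\ dual equivalence) classes. The starting point is the three-step outline of \cite{Assaf2007-2} recalled above: quasisymmetric expansion reduces Macdonald positivity for a $2$-column partition to the statement that, for each $2$-equivalence class $C$ of permutations, the sum $\sum_{w \in C} Q_{\iDes(w)}$ of fundamental quasisymmetric functions is Schur positive. By Remark~\ref{rmk:dec}, the same sum over a $1$-equivalence (dual equivalence) class is a single Schur function, hence Schur positive. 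So it suffices to show that $\phi^{(2)}$ induces a bijection from the set of $2$-equivalence classes to the set of $1$-equivalence classes that preserves the multiset of inverse descent sets within each class.

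First I would observe that, by Proposition~\ref{prop:props}(iii), $\phi^{(2)}$ preserves $\iDes$ of every individual permutation, so it already preserves the quasisymmetric generating function of any set of permutations it is applied to pointwise. Thus the only thing left to check is that $\phi^{(2)}$ maps each $2$-equivalence class \emph{onto} a $1$-equivalence class bijectively. Here Proposition~\ref{prop:2equiv} is the crux: it says $\phi^{(2)} \circ D^{(1)}_i = D^{(2)}_i \circ \phi^{(2)}$ whenever $D^{(2)}_i$ (equivalently $D^{(1)}_i$) is defined at the relevant permutation, i.e.\ whenever $i$ does not lie between $i-1$ and $i+1$. Since $\phi^{(2)}$ is a bijection on $\mathcal{S}_n$ (Proposition~\ref{prop:props}(i)), this intertwining relation shows that $\phi^{(2)}$ conjugates the generators $\{D^{(1)}_i\}_{i \geq 2}$ of the $1$-equivalence relation into the generators $\{D^{(2)}_i\}_{i \geq 2}$ of the $2$-equivalence relation. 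Therefore $w \sim_1 u \iff \phi^{(2)}(w) \sim_2 \phi^{(2)}(u)$, so $\phi^{(2)}$ carries $1$-classes bijectively onto $2$-classes.

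Combining these: for a fixed $2$-class $C$, write $C = \phi^{(2)}(C_0)$ where $C_0$ is the corresponding $1$-class. Then
\begin{displaymath}
  \sum_{w \in C} Q_{\iDes(w)} \; = \; \sum_{v \in C_0} Q_{\iDes(\phi^{(2)}(v))} \; = \; \sum_{v \in C_0} Q_{\iDes(v)},
\end{displaymath}
using Proposition~\ref{prop:props}(iii) in the last equality. By Remark~\ref{rmk:dec} the right-hand side is a Schur function, hence Schur positive; summing over all $2$-classes (equivalently, over the terms in the $2$-column Macdonald polynomial grouped by $2$-equivalence, using that $\Des_2$ and $|\Inv_2|$ are constant on $2$-classes) yields that the Macdonald polynomial indexed by a $2$-column partition is a positive sum of Schur functions.

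I expect the main obstacle to be not in this skeleton but in making precise the reduction of Step~1 — that is, spelling out how the LLT/Macdonald expansion over $k$-tuples of ribbons, with weights $q^{\maj_k}$ or the corresponding $\Inv_k$/$\Des_k$ data, refines into the quasisymmetric sum $\sum_w Q_{\iDes(w)}$ over permutations with prescribed $\Des_2$ and $|\Inv_2|$, and that this refinement respects $2$-equivalence classes (this is exactly the content imported from \cite{Assaf2007-2,Assaf2007-3}). Once that dictionary is granted, the argument is purely formal: Proposition~\ref{prop:2equiv} plus Proposition~\ref{prop:props}(i) and (iii) do all the work, and no new estimates or constructions are required. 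One small technical point worth stating carefully is that Proposition~\ref{prop:2equiv} is only asserted when $i$ does not lie between $i-1$ and $i+1$; but this is precisely the domain on which both $D^{(1)}_i$ and $D^{(2)}_i$ are defined, so the intertwining relation is valid exactly where it needs to be, and the conjugation-of-generators conclusion goes through without incident.
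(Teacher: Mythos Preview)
Your argument is correct and follows exactly the route the paper takes: the paragraph preceding Corollary~\ref{cor:2pos} deduces it from Proposition~\ref{prop:2equiv} together with Proposition~\ref{prop:props}(iii) and Remark~\ref{rmk:dec}, and you have simply unpacked that sentence into the explicit class-by-class bijection and quasisymmetric identity. The one cosmetic slip is that early on you say $\phi^{(2)}$ should carry $2$-classes to $1$-classes, whereas the intertwining relation (and your own subsequent computation) has it carrying $1$-classes to $2$-classes; since $\phi^{(2)}$ is a bijection this is immaterial, and your final displayed equality is stated in the correct direction.
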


For $k \geq 3$, it is not possible for $\phi^{(k)} \left(
  D^{(k-1)}_i(w) \right) = D^{(k)}_i \left( \phi^{(k)}(w) \right)$ in
general. The reason for this is that the sizes of the $k$-equivalence
classes increase with $k$. For permutations of length $n$, the
$n$-equivalence classes have a nice description given in
\cite{Assaf2007-2} which allows us to prove the following.

\begin{proposition}
  For $w$ a permutation such that $i$ does not lie between $i-1$ and
  $i+1$, we have
  \begin{equation}
    \phi^{[1,n]} (w) \sim_{n} \phi^{[1,n]} \left( D^{(1)}_i(w) \right) .
  \end{equation}
\label{prop:nequiv}
\end{proposition}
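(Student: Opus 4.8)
The statement to prove is that $\phi^{[1,n]}(w)$ and $\phi^{[1,n]}(D^{(1)}_i(w))$ lie in the same $n$-equivalence class, where $\phi^{[1,n]} = \phi^{(n)} \circ \cdots \circ \phi^{(2)}$ is the full composition of our bijections, so that $\maj_n = \inv$ on both ends. The natural strategy is to leverage the invariance already established: by \refprop{props}(iii), each $\phi^{(k)}$ preserves the inverse descent set, hence so does $\phi^{[1,n]}$, so $\iDes(\phi^{[1,n]}(w)) = \iDes(w)$ and likewise for $D^{(1)}_i(w)$. Now $d_i = D^{(1)}_i$ changes $\iDes$ in a controlled way (it is the elementary dual equivalence move), so the two images have a simple relation between their inverse descent sets. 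The key point is then to invoke the description of $n$-equivalence classes from \cite{Assaf2007-2}: for permutations of length $n$, the $n$-equivalence class of a permutation depends only on combinatorial data that I expect to be precisely captured by the descent set of the inverse (equivalently, two length-$n$ permutations are $n$-equivalent if and only if their inverse descent sets are related by the appropriate elementary moves, or even agree up to the $d_i$/$\tilde d_i$ action).

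The steps, in order, would be: first recall from \cite{Assaf2007-2} the explicit combinatorial characterization of $n$-equivalence classes for permutations of length $n$ (these are the largest classes, and the cited description makes them transparent). Second, observe that since $\maj_n = \inv$, the involutions $D^{(n)}_i$ are just the $d_i$ or $\tilde d_i$ according to $\dist(i-1,i,i+1) \le n$ — but for a permutation of length $n$ every distance is at most $n-1 < n$, so $D^{(n)}_i = \tilde d_i$ always. Hence $n$-equivalence is generated by the $\tilde d_i$ alone. Third, translate the problem entirely to inverse descent sets: applying \refprop{props}(iii), both $\phi^{[1,n]}(w)$ and $\phi^{[1,n]}(D^{(1)}_i(w))$ have the same inverse descent sets as $w$ and $d_i(w)$ respectively, and one checks that $d_i(w)$ and $w$ have inverse descent sets that differ exactly by the swap at the positions of $i$ and $i\pm1$ in $w^{-1}$ — precisely the kind of difference that a single application of $\tilde d_{i'}$ (for the appropriate $i'$) produces on the inverse. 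Fourth, conclude that $\phi^{[1,n]}(D^{(1)}_i(w)) = \tilde d_{i'}(\phi^{[1,n]}(w))$ for that $i'$, or differs from it by a short chain of such moves, which gives $n$-equivalence.

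The main obstacle I anticipate is making precise the passage from ``$d_i$ changes $\iDes$ by a transposition'' to ``$\tilde d_{i'}$ realizes that same change on the target permutation.'' One must verify that the hypothesis ``$i$ does not lie between $i-1$ and $i+1$ in $w$'' translates, under inversion, to the hypothesis needed to apply $\tilde d_{i'}$ (namely that $i'$ does not lie between $i'-1$ and $i'+1$ in the relevant permutation), and that the resulting permutation is indeed in the $n$-class rather than merely having a nearby inverse descent set. This is exactly the point where the explicit description of $n$-equivalence classes from \cite{Assaf2007-2} is indispensable: it should say that an $n$-equivalence class is determined by $\iDes$ together with the data of which ``free'' transpositions are available, and the calculation above shows both images carry matching data. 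A secondary technical point is bookkeeping the case $i \in \{2,\dots,n-1\}$ versus boundary behavior, but since the hypothesis already excludes the degenerate configuration, this should not cause real trouble. Once the inverse-descent-set translation is set up, the verification is a finite case check parallel to the one carried out in the proof of \refprop{2equiv}.
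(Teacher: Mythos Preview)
Your strategy rests on the expectation that $n$-equivalence classes are controlled by the inverse descent set, and this is where the approach breaks down. Since $D^{(n)}_i = \tilde d_i$ for every permutation of length $n$ (as you correctly note), $n$-equivalence is generated by the cyclic shifts $\tilde d_i$; but $\tilde d_i$ does \emph{not} preserve $\iDes$. For instance, $231$ and $312$ lie in the same $n$-class for $n=3$, yet $\iDes(231)=\{1\}$ while $\iDes(312)=\{2\}$. So knowing $\iDes(\phi^{[1,n]}(w))$ and $\iDes(\phi^{[1,n]}(d_i(w)))$ tells you nothing useful about whether the two images are $n$-equivalent, and the hoped-for step ``$\phi^{[1,n]}(d_i(w)) = \tilde d_{i'}(\phi^{[1,n]}(w))$ for the appropriate $i'$'' has no justification.

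The paper's argument uses entirely different invariants. The characterization of $n$-classes invoked from \cite{Assaf2007-2} is that two permutations are $n$-equivalent if and only if they have the same value of $\inv$ and the same truth value of $w_1 > w_n$. The proof then uses \refprop{props}(ii), not (iii): iterating property (ii) through the composition $\phi^{(n)}\circ\cdots\circ\phi^{(2)}$ shows that $w_{n-1} > w_n$ if and only if $\phi^{[1,n]}(w)_1 > \phi^{[1,n]}(w)_n$. Since $\Des_1$ is constant on $1$-classes (the key observation preceding the proposition), both $\maj(w)$ and the condition $w_{n-1}>w_n$ are constant on $1$-classes; under $\phi^{[1,n]}$ these become $\inv$ and the condition $u_1>u_n$, which are exactly the invariants that determine the $n$-class. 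The inverse descent set, while preserved by $\phi^{[1,n]}$, plays no role here.
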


\begin{proof}
  For this case, $\maj_n = \inv$ in the usual sense and there are no
  $n$-descents to consider. As already noted, $\inv$ is constant on
  $n$-equivalence classes and, since $D^{(n)}_i \equiv \tilde{d}_i$,
  $w_1 > w_n$ for some $w$ in an $n$-class if and only if $w_1 > w_n$
  for every $w$ in an $n$-class. Furthermore, it is not difficult to
  show that these two properties completely characterize $n$-classes.
  Since $D^{(1)}_i \equiv d_i$, $w_{n-1} > w_n$ for some $w$ in a
  $1$-class if and only if $w_{n-1} > w_n$ for every $w$ in a
  $1$-class.  By \refprop{majk}, $\maj(w) = \inv(\phi^{[1,n]} (w))$,
  and by \refprop{props} (ii), $w_{n-1} > w_n$ if and only if
  $\phi^{[1,n]} (w)_1 > \phi^{[1,n]} (w)_n$. Therefore if $w \sim_1
  u$, then $\inv(\phi^{[1,n]} (w)) = \inv(\phi^{[1,n]} (u))$ and
  $\phi^{[1,n]} (w)_1 > \phi^{[1,n]} (w)_n$ if and only if
  $\phi^{[1,n]} (u)_1 > \phi^{[1,n]} (u)_n$. The result now follows.
\end{proof}

\begin{corollary}
  Macdonald polynomials indexed by a single row are Schur positive.
\label{cor:npos}
\end{corollary}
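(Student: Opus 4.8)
The plan is to deduce \refcor{npos} from \refprop{nequiv} together with the three-step reduction outlined above, in exactly the way \refcor{2pos} follows from \refprop{2equiv}. A single-row partition $(n)$ has $n$ columns, so after the first two steps (reduction to permutations via quasisymmetric functions, and the observation that $\Des_k$ and $|\Inv_k|$ are constant on $k$-equivalence classes) what remains is the $k=n$ case of the third step: the sum of the fundamental quasisymmetric functions attached to the permutations in a single $n$-equivalence class of $\mathcal{S}_n$ is Schur positive. Granting this, the Schur expansion of the single-row Macdonald polynomial has nonnegative coefficients, which is the assertion of the corollary.

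To establish the $k=n$ case, I would use $\phi^{[1,n]}$ to transport $1$-equivalence classes onto $n$-classes. First, $\phi^{[1,n]}$ is a bijection on $\mathcal{S}_n$, being a composition of the bijections $\phi^{(k)}$ of \refprop{props}(i). Second, each $\phi^{(k)}$ fixes the inverse descent set by \refprop{props}(iii), so the composition $\phi^{[1,n]}$ does as well; since the quasisymmetric function attached to a permutation depends only on its inverse descent set, $\phi^{[1,n]}$ preserves that quasisymmetric function permutation by permutation. Third, \refprop{nequiv} says that whenever $u$ and $D^{(1)}_i(u)$ are neighbors in a $1$-class, their images under $\phi^{[1,n]}$ are $n$-equivalent; since a $1$-class is connected by the generators $D^{(1)}_i$ and $\sim_n$ is an equivalence relation, it follows that $\phi^{[1,n]}$ carries each entire $1$-class into a single $n$-class. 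Because $\phi^{[1,n]}$ is a bijection, the images of the $1$-classes partition $\mathcal{S}_n$, and each such image lies in one $n$-class; hence every $n$-class is a disjoint union of images of $1$-classes. Summing the attached quasisymmetric functions over a fixed $n$-class therefore yields the same value as summing over the corresponding union of $1$-classes (the multiset of inverse descent sets is unchanged), and by \refrmk{dec} each $1$-class contributes a single Schur function. So the sum over each $n$-class is a sum of Schur functions, which completes the reduction.

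The genuinely new input is \refprop{nequiv} itself — that $\phi^{[1,n]}$ is compatible with the coarsening from $1$-equivalence to $n$-equivalence — and that rests on the explicit description of $n$-equivalence classes from \cite{Assaf2007-2} (a class is pinned down by the value of $\inv$ together with whether $w_1 > w_n$). Given \refprop{nequiv}, the remaining argument is pure bookkeeping; the one point that needs a moment of care is that the preimage under $\phi^{[1,n]}$ of an $n$-class is a union of whole $1$-classes, but this is immediate once one knows each $1$-class maps into a single $n$-class. I do not expect any serious obstacle here beyond keeping straight that "the sum over a class" is a well-defined function of the multiset of inverse descent sets, which is already built into the framework recalled above.
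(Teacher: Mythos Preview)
Your argument is correct and is exactly the one the paper intends: just as \refcor{2pos} is derived from \refprop{2equiv} via \refprop{props}(iii) and \refrmk{dec}, \refcor{npos} follows from \refprop{nequiv} by the same bookkeeping, using that $\phi^{[1,n]}$ is a bijection preserving $\iDes$ and sending each $1$-class into a single $n$-class. The paper gives no separate proof of \refcor{npos} beyond this parallel, so your expanded version simply makes explicit what the paper leaves implicit.
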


Given this, one might still hope to express each $k$-equivalence class
as a union of the images of certain $k-1$-equivalence classes under an
appropriate map. However, for $k \geq 3$, neither $\phi^{(k)}$ nor the
corresponding composition of Kadell's bijections accomplishes
this. There is, however, considerable evidence suggesting that such a
family of bijections does exist, and so we conclude with the following
conjecture which, as a corollary, would yield a simple proof of
Macdonald positivity.

\begin{conjecture}
  There exists a family of bijections $\theta^{(k)}$ on permutations
  satisfying Propositions \ref{prop:props} and \ref{prop:majk} such
  that if $w \sim_{k-1} u$ then $\theta^{(k)}(w) \sim_{k}
  \theta^{(k)}(u)$.
\label{conj:theta}
\end{conjecture}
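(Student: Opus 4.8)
The plan is to follow the templates of \refprop{2equiv} and \refprop{nequiv}, building $\theta^{(k)}$ recursively in the manner of $\phi^{(k)}$ but with the one-letter-at-a-time involution replaced by a refinement sensitive to $k$-equivalence. Concretely, one seeks involutions $\widetilde{\gamma}^{(k)}_j$ on permutations with $\theta^{(k)}(wx)=\widetilde{\gamma}^{(k)}_{n+1}\bigl(\theta^{(k)}(w)\bigr)\,x$ as in \refeq{recursivePhi}, where $\widetilde{\gamma}^{(k)}_j$ agrees with $\gamma^{(k)}_j$ except for correction transpositions inserted exactly when applying $\gamma^{(k)}_j$ would carry a triple $i-1,i,i+1$ across the distance threshold separating the two branches of $D^{(k)}_i$ in \refeq{Dk}; the prototype for such a correction is the interchange of $i-1$ and $i+1$ that distinguishes $d_i$ from $\tilde{d}_i$ in the proof of \refprop{2equiv}. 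One should note at the outset that the strong intertwining $\theta^{(k)}\circ D^{(k-1)}_i = D^{(k)}_i\circ\theta^{(k)}$, which is what \refprop{2equiv} establishes for $k=2$, cannot hold for $k\geq 3$: it would force a $(k-1)$-equivalence class to map bijectively onto a $k$-equivalence class, contradicting the fact, noted before \refprop{nequiv}, that these classes grow with $k$. So the target must be the weaker statement of \refconj{theta}, to be reached through class invariants as in \refprop{nequiv} rather than through a pointwise identity.

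The steps, in order, would be the following. First, fix the recursive definition and verify the analogues of \refprop{props} and \refprop{majk} for $\theta^{(k)}$: involutivity of each $\widetilde{\gamma}^{(k)}_j$ gives bijectivity with inverse the reversed composition; the correction transpositions interchange values differing by $2$ with the value $i$ lying outside the interval they span, so, exactly as in the proof of \refprop{props}, they leave $\iDes$ undisturbed; and they preserve $\maj_k$ by the same invariance that makes $\Des_k$ and $|\Inv_k|$ constant under $D^{(k)}_i$ in \cite{Assaf2007-2}, so that $\maj_{k-1}(w)=\maj_k(\theta^{(k)}(w))$ persists. Second, observe that it suffices to treat a single generator, i.e.\ to show $\theta^{(k)}(D^{(k-1)}_i(w))\sim_k\theta^{(k)}(w)$; for this one needs a complete set of combinatorial invariants for $k$-equivalence classes (when $k=n$ this is $\inv$ together with the sign of $w_1-w_n$, as exploited in \refprop{nequiv}) and must check that each takes the same value on the two permutations. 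Third, carry out that check: $\maj_k$ agrees because $\theta^{(k)}$ sends it to $\maj_{k-1}$, which is constant on $(k-1)$-classes by the observation of \cite{Assaf2007-2} that $\Des_{k-1}$ and $|\Inv_{k-1}|$ are; and the remaining, ``boundary'' invariants — the analogues of the sign of $w_1-w_n$ — are handled as in \refprop{nequiv} by using the analogue of \refprop{props}(ii) to trade the comparison in $\theta^{(k)}(w)$ for a comparison among the last letters of $w$, which is unaffected by $D^{(k-1)}_i$ since the latter is some $d_i$ or $\tilde{d}_i$. Composing $\theta^{(2)},\dots,\theta^{(k)}$ and invoking \refrmk{dec} together with \refprop{props}(iii) then yields Schur positivity of every Macdonald polynomial indexed by a partition with at most $k$ columns, just as \refprop{2equiv} yields \refcor{2pos} and \refprop{nequiv} yields \refcor{npos}.

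The main obstacle is the structure theory invoked in the second step: for $3\leq k\leq n-1$ there is no recorded description of the $k$-equivalence classes comparable to the dual-equivalence description at $k=1$ or the $\inv$-and-sign description at $k=n$, and producing one — in particular, isolating the finitely many comparisons that must be appended to $\Des_k$ and $|\Inv_k|$ to pin down a $k$-class — is itself a substantial problem and the real content behind \refconj{theta}. A related difficulty is engineering the correction rule for $\widetilde{\gamma}^{(k)}_j$ so that all of those invariants come out right simultaneously: a single $D^{(k-1)}_i$ move may now require several correction transpositions, a correction transposition relocates letters and can trigger further threshold crossings or entangle with a neighbouring triple, and proving that this cascade is confluent and terminates is precisely the bookkeeping that the naive $\phi^{(k)}$ fails to control. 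If an explicit $\theta^{(k)}$ proves too hard to pin down, a fallback worth pursuing is a purely enumerative matching of $(k-1)$-classes with $k$-classes respecting the pair $(\maj,\iDes)$ — via a transfer-matrix or sign-reversing-involution argument — which would still deliver Macdonald positivity through \refrmk{dec}, though without the Foata-type bijection that \refconj{theta} demands.
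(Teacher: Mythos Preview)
The statement you are attempting to prove is labeled a \emph{conjecture} in the paper, and the paper offers no proof of it; it is presented as an open problem whose resolution would yield an elementary proof of Macdonald positivity. There is therefore no argument in the paper against which to compare your proposal.

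Your write-up is not a proof but a research outline, and you essentially say so yourself: the second step requires a complete set of combinatorial invariants characterizing $k$-equivalence classes for intermediate $k$, and you correctly identify that no such description is known for $3\le k\le n-1$. Without that structure theory the argument cannot be carried out, and the construction of the corrected involutions $\widetilde{\gamma}^{(k)}_j$ remains speculative --- you have not specified them, only described properties they would need to satisfy, and there is no verification that such involutions exist or that the cascade of corrections is well-defined. These are exactly the difficulties that leave \refconj{theta} open; your proposal is a reasonable sketch of what a proof might look like and correctly isolates where the content lies, but it does not close the gap.
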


%%%%%%%%%%%%%%%%%%%%%%%%%%%%%%%%%%%%%%%%%%%%%%%%%%%%%%%%%%%%
%
%  Bibliography
%
%%%%%%%%%%%%%%%%%%%%%%%%%%%%%%%%%%%%%%%%%%%%%%%%%%%%%%%%%%%%

\bibliographystyle{amsalpha}

%%%%%%%%%%%%%%%%%%%%%%%%%%%%%%%%%%%%%%%%%%%%%%%%%%%%%%%%%%%%
%
%  Bibliography
%
%%%%%%%%%%%%%%%%%%%%%%%%%%%%%%%%%%%%%%%%%%%%%%%%%%%%%%%%%%%%

\bibliographystyle{abbrv} 
\bibliography{majk}

\end{document}